\newcommand\reallywidehat[1]{%
\savestack{\tmpbox}{\stretchto{%
  \scaleto{%
    \scalerel*[\widthof{\ensuremath{#1}}]{\kern-.6pt\bigwedge\kern-.6pt}%
    {\rule[-\textheight/2]{1ex}{\textheight}}%WIDTH-LIMITED BIG WEDGE
  }{\textheight}% 
}{0.5ex}}%
\stackon[1pt]{#1}{\tmpbox}%
}
\newcommand{\R}{\mathbb R}
\newcommand{\Z}{\mathbb Z}
\newcommand{\N}{\mathbb N}
\newcommand{\imm}{\mathrm{i}}           
\newcommand{\e}{\mathrm{e}}                
\newcommand{\de}{\mathop{}\!\mathrm{d}}
\newcommand{\eps}{\varepsilon}
\newcommand{\vertiii}[1]{{\left\vert\kern-0.25ex\left\vert\kern-0.25ex\left\vert #1 
    \right\vert\kern-0.25ex\right\vert\kern-0.25ex\right\vert}}
\theoremstyle{definition}
\theoremstyle{plain}
\newtheorem{Theorem}{Theorem}[section]
\newtheorem{lemma}{Lemma}[section]
\newtheorem{Proposition}{Proposition}[section]
\theoremstyle{remark}
\newtheorem{Remark}{Remark}[section]
\numberwithin{equation}{section}
\title[Forward vs backward
Landau damping]{Comparison between the Cauchy problem and the scattering problem for the Landau Damping in the Vlasov-HMF equation}
\author[D.\ Benedetto]{Dario Benedetto}
\address{Dario Benedetto \hfill\break \indent
  Dipartimento di Matematica, Universit\`a di Roma `La Sapienza'
  \hfill\break \indent
  P.le Aldo Moro 2, 00185 Roma, Italy}
\email{benedetto@mat.uniroma1.it}
\author[E.\ Caglioti]{Emanuele Caglioti}
\address{Emanuele Caglioti \hfill\break \indent
  Dipartimento di Matematica, Universit\`a di Roma `La Sapienza'
  \hfill\break \indent
  P.le Aldo Moro 2, 00185 Roma, Italy}
\email{caglioti@mat.uniroma1.it}
\author[S.\ Rossi]{Stefano Rossi}
\address{Stefano Rossi\hfill\break \indent
  Dipartimento di Matematica, Universit\`a di Roma `La Sapienza'
  \hfill\break \indent
  P.le Aldo Moro 2, 00185 Roma, Italy}
\email{stef.rossi@uniroma1.it}
\begin{document}
\date{\today}
\begin{abstract}
  We analyze the analytic Landau damping problem 
  for the Vlasov-HMF equation, by fixing the asymptotic
  behavior of the solution. We use a new method
  for this ``scattering problem'', closer to the one
  used for the Cauchy problem. In this way we are able to compare
  the two results, emphasizing the different influence 
  of the plasma echoes in the two approaches.
  In particular, we 
  prove a non-perturbative
  result for the scattering problem.
\end{abstract}

\keywords{Landau damping, HMF model, Plasma echoes}
\subjclass[2010]{
  35A10, % cauchy kowlaskaya
  % 35Q92, %PDEs in connection with biology,
  % chemistry and other natural sciences
  35B40, %asymptotic behavior of solutions to PDEs
  35Q83, %   	Vlasov equations
  % 74A25, Molecular, statistical, and kinetic theories in solid mechanics
  76N10 %Existence, uniqueness,
  % and regularity theory for compressible fluids and gas dynamics
  % 92B25   % Biological rhythms and synchronization
  % Boltzmann
  % kinetic theory of gases
  %large deviations
}

\maketitle

\pagestyle{headings}

\section{Introduction}
In the spatially periodic case,
the Vlasov-Poisson equation in the HMF approximation
reads as
\begin{equation}
  \label{HMF}
  \partial_t f(t,x,v) + v \partial_{x} f(t,x,v)+\mathcal{F}[f](t,x) \partial_{v}f(t,x,v)=0,
\end{equation}
where
\begin{equation}
  \label{eq:campo}
  \mathcal{F}[f](t,x) =-
  \partial_x \Biggl{(}\int_{S^1 \times \mathbb{R}}\cos(x-y)f(t,y,v)
  \, dy \, dv \Biggr{)} 
\end{equation}
is the mean-field force. Here $f(t,x,v)$ is the normalized
density of electrons
with position $x\in S^1$ and velocity $v \in \R$,
in a collisionless electrically neutral plasma.

This model has been widely studied in the last decades being a handy
reduction of the Vlasov-Poisson equation, in which 
the singularity of the kernel is removed by replacing it with a cosine function.
It can be easily implemented numerically to study the features of a
long-range interaction (see \cite{AR}, \cite{YBBDR},
\cite{FR}). Furthermore,
this model is also a useful testing ground from a mathematical point of view
for studying issues about long-time behavior of solutions.
This is the case of the Landau damping, i.e. the existence of
damped solutions near a stationary regular state. 
The damping consists
in the existence of $\omega(x,v)$ such that
\begin{equation}
  \label{eq:limrh}
  \lim_{t\to+\infty} (f(t,x,v) - \omega (x-vt,v)) = 0,
\end{equation}
which means that the flow governed by the mean-field force
is asymptotically free, and $f(t,x,v)$ converges weakly to the
mean of $\omega$ in the $x$ variable.

After Landau's pioneering work of 1946 \cite{L} for the linearized Vlasov
equation, the damping phenomenon for mean-field models has
been extensively studied and understood in the last decades.
The first result for the nonlinear Vlasov-Poisson
problem is proved by Caglioti
and Maffei in \cite{CM}. They read the problem as a scattering
problem for the flow, 
by fixing the asymptotic datum $\omega$ and finding a solution
$f(t,x,v)$ satisfying \eqref{eq:limrh}.
Subsequently, a proof with less restrictive hypotheses was given in \cite{HV}.

In \cite{MV} Mouhot and Villani, introducing new mathematical techniques,
solve the Cauchy problem for the nonlinear Vlasov-Poisson equation,
with analytic and Gevrey initial data,
and show the existence of the asymptotic state $\omega$.
A substantial analogy exists between the Landau damping in plasma physics
and the inviscid damping for the two-dimensional Euler equation.
In fact in \cite{BM} the damping near the Couette flow
has been proved using different techniques, this gives rise to a
new simpler proof of the Landau damping result in \cite{BMM} 
(see also the recent result in \cite{GNR} for a more elementary proof).
For what concern the damping with Sobolev regularity, it has been
shown by Lin and Zeng (\cite{LZ}, \cite{LZ2}) that for very low
regularities
Landau damping cannot occur. Although, in the case of the Vlasov-HMF
equation with sufficiently high Sobolev regularity,
Faou and Rousset in \cite{FR} have succeeded in proving the damping
with a polynomial rate.  
A Landau damping result for the full Vlasov-Poisson equation with Sobolev data is still missing,
however Bedrossian in \cite{B} has given a negative answer to the possibility
of a straightforward extension to this setting of Mouhot and Villani's work in \cite{MV}.

The ``backward'' approach, which provides
the solution of the scattering problem
with a given $\omega$, 
and the ``forward'' approach, which provides the solution for the Cauchy problem
with $f_0(x,v)=f(0,x,v)$, are different
from many points of view, starting from the technical ones:
in the backward approach, as in \cite{CM} and \cite{HV}
(and also in \cite{BCM}),
using a Lagrangian point of view, it is proved that the flow
is close to the free one.
In this work, instead, in the HMF approximation,
we adapt the forward techniques to the backward problem
to make a comparison in the case of analytic solutions.
In particular, we discuss the different way the two approaches
overcome the difficulties due to the presence of the ``echoes'',
i.e. resonances at certain times between the Fourier modes of the solution
(for an in-depth analysis of echoes in Vlasov-Poisson equation 
with analytic or Gevrey initial data, see again \cite{GNR}).
This highlights a simplified structure of the norms used
in the backward approach.
Moreover, the backward technique
is unable to identify initial data for which damping occurs,
but works also in a non-perturbative regime, i.e.
without requiring the solution to be a small perturbation of a
stationary state.

In addition, as a by-product, we prove the backward nonlinear Landau damping
for the HMF equation, previously unknown.
Perhaps, this 
Eulerian approach can be applied also in the study
of the backward problem for other interesting models.

\vskip.3cm

The work is divided as follows: in Section 2 we prove the
Landau damping in a perturbative regime using the scattering approach.
We give {\it a priori} estimates in the time interval $[0, T]$
imposing that the solution reaches the asymptotic state at time $T$.
Then we send $T$ to infinity, obtaining the solution.
In Section 3, we reanalyze the problem in  $[\tau, T]$, with
$T\to+\infty$.
With a more subtle estimate of the echoes terms,
we obtain a non-perturbative existence
result for sufficiently large values of $\tau$.
In Section 4 we present a proof of the damping for the Cauchy problem
in order to highlight the differences with the backward approach,
which we deepen in Section 5.

In both approaches, we need to control the loss of analytic regularity of
the solutions. For this reason 
we use techniques inspired from the abstract Cauchy-Kovalevskaya theory
(see \cite{C}), adapted to this kind of problems in 
\cite{BCMD}.

\section{The scattering problem}

We consider solutions  of \eqref{HMF} which are small perturbations of
a spatially homogenous solution $\eta$, i.e.
\begin{equation}
  \label{eq:feer}
  f(t,x,v)=\eta(v) + \eps r(t,x,v),
\end{equation}
and we assume $\eta$ is an analytic function of the velocities.
The equation verified by the perturbation $r$ is
\begin{equation*}
\partial_t r(t,x,v) + v \partial_{x} r(t,x,v)+\mathcal{F}[r](t,x)
\partial_{v}\bigl{(}\eta(v)+ \eps r(t,x,v)\bigr{)}=0,
\end{equation*}
where the operator $\mathcal F$ is defined in \eqref{eq:campo}.

To state the asymptotic behavior as in \eqref{eq:limrh},
we define $h(t,x,v)=r(t,x+vt, v)$, which verifies the following equation:
\begin{equation}
\label{principale}
\partial_t h=\{ \psi[h],\eta+ \eps h \},
\end{equation}
where $\psi$ is the potential field generated
by the perturbation, evaluated along the free flow 
\begin{equation}
\label{campo}
\psi[h](t,x,v)=\int_{S_1 \times \mathbb{R}} \cos(x-y+(v-u)t)h(t,y,u) \, dy \, du
\end{equation}
and where $\{	 ,\}$ is the Poisson bracket.

Recalling \eqref{eq:limrh} and \eqref{eq:feer},
we study the
damping problem  by setting
$\omega(x,v) = \eta(v) + \eps h_\infty(x,v)$, i.e. 
by 
searching for a solution for \eqref{principale} such that
\[
\lim_{t \to +\infty}\|h(t,x,v) - h_{\infty}(x,v)\|_{\infty}=0
\]
where $h_{\infty}$ is a mean-zero analytic datum
with $\| h_{\infty} \|_{\lambda}<+\infty$ for some $\lambda>0$.

Firstly, we study the evolution in the time interval $[0,T]$
considering the following problem:
\begin{equation}
\begin{cases}
\label{tempoT}
\partial_t h^T(t,x,v)=\{ \psi[h^T],\eta+ \eps h^T \} \quad 0\le t\le T,\\
h^T(T,x,v)=  h_\infty(x,v).
\end{cases}
\end{equation}
Then, we show that, for $T\to +\infty$, $h^T$
converges to a solution $h$,  which solves the asymptotic problem.

We work in Fourier transform in $S_1\times \R$, using the following notation:
\[
  \widehat{g}_n(\xi )=\frac{1}{2\pi} \int_{S_1\times \R}
  \e^{-inx} \e^{-iv\xi} g(x,v) \, dx \, dv
\]
with $n \in \mathbb{Z}$ and $ \xi \in \mathbb{R}$.
In Fourier space the  system is
\begin{equation}
\label{differenziale}
\partial_t \widehat{h^T}_n(t, \xi)=\delta_{n, \pm 1}n\frac{\imm }{2}\zeta^T_n(t)\widetilde{\eta'}(\xi-nt)- \eps  \sum_{k= \pm 1}k\frac{\zeta^T_k(t)}{2}\widehat{h^T}_{n-k}(t, \xi-kt)(\xi -nt),
\end{equation}
where $\widetilde{\eta'}$ is the Fourier transform of $\eta'$ in the
velocity and  
$\zeta^T_n $ for $n = \pm 1$ is the electric field:
\begin{equation}
  \label{eq:zita}
  \zeta^T_n(t) = \widehat{h^T}_n(t, nt).
\end{equation}
Integrating equation \eqref{differenziale}
between $[t,T]$ and putting $\xi=nt$, we get an equation for $\zeta^T$:
\begin{multline}
  \label{zetaequa}
  \zeta^T_n(t)=
  \widehat{h^T}_n(T, nt)-\frac{\imm}{2}n \int_t^T \zeta^T_n(s)
  \widetilde{\eta'}(n(t -s))\, ds\\
  -
  \frac{\eps}{2} \sum_{k= \pm 1}\int_t^T \zeta^T_k(s)\widehat{h^T}_{n-k}
  (s, n t -ks)kn(s-t) \, ds.
\end{multline}
In order to give \emph{a priori} estimates, it is convenient to
consider $(\zeta^T_{\pm 1}, h^T)$ as a coupled
system, where \eqref{eq:zita} is a consequence of the uniqueness.

A key point in Landau damping problems
is the decay of the electric field.
To show this we define the norm of the electric field $\zeta^T$ as
\begin{equation}
\label{campoindietro}
M_{\lambda, T}[\zeta^T]=\sup_{t \in [0, T]} \e^{\lambda t}|\zeta_1^T(t)|=\sup_{t \in [0, T]} \e^{\lambda t}|\zeta_{-1}^T(t)|.
\end{equation}

We also define a norm which quantifies the analyticity of a
function $g$ of the phase space:
\begin{equation}
\label{analytic}
\|g\|_{\mu}= \sup_{n,\xi}\e^{\mu \left \langle n,\xi \right \rangle}|\widehat{g}_n(\xi)|,
\end{equation}
where $\mu>0$ is a parameter and
$\left \langle n,\xi \right \rangle= (1+n^2+\xi^2)^{\frac{1}{2}}$.

To take into account the decay of the analytic regularity,
we define the weighted-in-time analytic norm of the solution $h^T(t,x,v)$
as 
\begin{equation}
\label{CK}
N_{\lambda, T}[h^T]= \sup_{(\mu,t) \in D_{\lambda,T}} \alpha_\delta^T(\mu,t)^{1/2}\|h^T(t)\|_{\mu},
\end{equation}
where
\begin{equation}
\label{domain}
D_{\lambda, T}=\{ (\mu, t)\in
[0,\lambda)\times[0, T], \alpha_\delta^T(\mu, t)>0
\}
\end{equation}
and $\alpha_\delta^T(\mu, t)=\lambda - \mu -  a_{T, \delta}(t)$.
The function $a_{T, \delta}(t)$ is the unique solution of the following
ordinary differential equation
\begin{equation}
\label{eqdiff}
\begin{cases}
\dot a_{T, \delta}(t)=-\delta \e^{-a_{T, \delta}(t)t}(1+t)  & \text{if} \quad 0 \le t \le T \\
a_{T,\delta}(T)=0,
\end{cases}
\end{equation}
and measures the loss of analytic regularity of the solutions
with respect to the final datum, as in \eqref{switchDelta} below:
it is $0$ at time $T$, and it is maximum at $t=0$.
In view of the limit $T\to +\infty$, we need the
following lemma, proved in the appendix.
\begin{lemma}
  \label{lemma:a}
  For $\delta>0$
  the unique solution of the backward 
  Cauchy problem \eqref{eqdiff} is positive and
  decreasing in time, and verifies 
\begin{equation*}
 a_{T,\delta}(0) \le C(\delta),
\end{equation*}
with $C(\delta) \to 0$ when $\delta$ goes to zero.
The solution $a_{\infty, \delta}(t)$ with initial datum
$$a_{\infty,\delta}(0) = \lim_{T \to +\infty}a_{T,\delta}(0)$$
is positive in $[0,+\infty)$ and 
\[
\lim_{t \to +\infty} a_{\infty,\delta}(t)=0.
\]
As a consequence, given $\lambda>0$,
we can choose $\delta$ sufficiently small such
that there exist $\mu\in (0,\lambda)$ for which
for any $T>0$,
$[0,\mu]\times [0,T] \subset D_{\lambda,T}$.
\end{lemma}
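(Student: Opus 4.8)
The plan is to rewrite the backward ODE \eqref{eqdiff} in integral form and read off all the qualitative statements from the sign of its right-hand side, isolating a single self-consistent estimate as the real content. Writing $a:=a_{T,\delta}$ and integrating \eqref{eqdiff} on $[t,T]$ with the terminal condition $a(T)=0$ gives
\[
a(t)=\delta\int_t^T \e^{-a(s)s}(1+s)\,ds .
\]
Existence and uniqueness on all of $[0,T]$ are standard, since the right-hand side of \eqref{eqdiff} is smooth in $a$ and continuous in $t$ and the a priori monotonicity below rules out blow-up. Because $\delta>0$ and the integrand is positive, the right-hand side of \eqref{eqdiff} is strictly negative, so $a$ is strictly decreasing; together with $a(T)=0$ this yields $a(t)>0$ on $[0,T)$ and $a\ge 0$ on $[0,T]$. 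In particular $\max_{t\in[0,T]}a(t)=a(0)$, so everything reduces to estimating $A:=a(0)=\delta\int_0^T \e^{-a(s)s}(1+s)\,ds$.

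The heart of the proof is the uniform bound $A\le C(\delta)$ with $C(\delta)\to0$, which I would obtain self-consistently. On an initial time layer the solution cannot have dropped much: integrating on $[0,s]$ and using $\e^{-a(s)s}\le1$ gives $a(s)\ge A-\delta(s+s^2/2)$, hence $a(s)\ge A/2$ for $s\le c\sqrt{A/\delta}$. On this layer $\e^{-a(s)s}\le\e^{-As/2}$, so its contribution to the integral is at most $\delta\int_0^\infty\e^{-As/2}(1+s)\,ds$, which is of order $\delta/A^{2}$. Provided the complementary tail is of lower order, this closes into an inequality of the form $A\le C\,\delta/A^{2}$, i.e. $A=O(\delta^{1/3})\to0$, uniformly in $T$. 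I expect the tail, where $a(s)$ is small but $s$ is large, to be the main obstacle: the crude monotonicity bound $a(\sigma)\le a(s)$ only yields the pointwise estimate $\e^{-a(s)s}(1+s)\le a(s)^2/\delta$, whose $1/s$-type decay is borderline non-integrable against $(1+s)$. The fix is a bootstrap: feed this pointwise bound back into the integral representation to upgrade the decay of $\e^{-a(s)s}$ — equivalently, to show $a(s)\,s\to+\infty$ fast enough — and thereby bound the tail by $\int a(s)^2\,ds$ and show it is negligible compared with $A$. This quantitative tail control is the one genuinely delicate step.

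Granting the uniform bound, the limit $T\to+\infty$ is soft. A backward comparison argument (the solutions share the same right-hand side, and $a_{T_2,\delta}(T_1)>0=a_{T_1,\delta}(T_1)$ for $T_1<T_2$, so by uniqueness their graphs cannot cross) shows that $a_{T,\delta}(t)$ is nondecreasing in $T$ for each fixed $t$; being also bounded by $C(\delta)$, it converges monotonically to some $a_{\infty,\delta}(t)$. Since the integrands are dominated by an integrable function for $T$ large, dominated convergence gives $a_{\infty,\delta}(t)=\delta\int_t^\infty\e^{-a_{\infty,\delta}(s)s}(1+s)\,ds$, so $a_{\infty,\delta}$ solves \eqref{eqdiff} on $[0,+\infty)$ with $a_{\infty,\delta}(0)=\lim_{T\to+\infty}a_{T,\delta}(0)\le C(\delta)$; it is positive (dominating each $a_{T,\delta}$) and decreasing. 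Moreover the total integral equals $a_{\infty,\delta}(0)/\delta<\infty$, so the integrand lies in $L^1[0,\infty)$ and its tail vanishes; hence $a_{\infty,\delta}(t)=\delta\int_t^\infty(\cdots)\to0$ as $t\to+\infty$.

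For the stated consequence, fix $\lambda>0$ and choose $\delta$ so small that $C(\delta)<\lambda$. Then for any $\mu\in(0,\lambda-C(\delta))$, any $T>0$, and any $(\mu',t)\in[0,\mu]\times[0,T]$, monotonicity of $a_{T,\delta}$ gives
\[
\alpha_\delta^T(\mu',t)=\lambda-\mu'-a_{T,\delta}(t)\ge\lambda-\mu-a_{T,\delta}(0)\ge\lambda-\mu-C(\delta)>0,
\]
so $[0,\mu]\times[0,T]\subset D_{\lambda,T}$ for every $T$, as required.
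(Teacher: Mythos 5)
Your structural steps (integral formulation, positivity, strict monotonicity, monotonicity in $T$ and convergence to $a_{\infty,\delta}$ on compact time intervals, and the final deduction of $[0,\mu]\times[0,T]\subset D_{\lambda,T}$) are correct and agree with the paper. But the two quantitative claims both have genuine gaps. For the central bound $A=a_{T,\delta}(0)\le C(\delta)$, your scheme cannot close as described: by the integral formula the ``complementary tail'' is not a remainder to be discarded but is \emph{exactly} $a(t_*)$, i.e.
\[
\delta\int_{t_*}^T \e^{-a(s)s}(1+s)\,ds=a(t_*)\ge \tfrac{A}{2}
\]
by your own initial-layer estimate. So the tail is never ``of lower order'' than $A$, and the planned bootstrap (bounding it by $\int a(s)^2\,ds$ and showing it is negligible compared with $A$) is trying to prove something false; at the critical scale $A\sim\delta^{1/3}$ the layer and the tail are each comparable to $A$. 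The paper's proof avoids estimating the tail altogether: it writes, for an arbitrary intermediate time $\widebar t$, $a(0)=a(\widebar t)+\delta\int_0^{\widebar t}\e^{-a(s)s}(1+s)\,ds\le a(\widebar t)+\delta/a(\widebar t)+\delta/a(\widebar t)^2$ (using only that $a$ is decreasing), keeps the tail as the explicit term $x=a(\widebar t)$, and optimizes over $x$, which by continuity sweeps the whole interval $(0,a(0)]$; this yields $a(0)\le\max(c_1,c_2)\delta^{1/3}$.

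The argument for $\lim_{t\to+\infty}a_{\infty,\delta}(t)=0$ is also circular. There is no integrable dominating function on $[t,+\infty)$ uniform in $T$ for the integrands $\e^{-a_{T}(s)s}(1+s)\,\chi_{\{s\le T\}}$: near $s=T$ one has $a_T(s)\approx 0$ and the integrand is of size $1+T$, so any dominating function must majorize $1+s$. Passing to the limit correctly (split at a fixed $T_0$, use $\delta\int_{T_0}^T(\cdots)=a_T(T_0)$, then let $T_0\to\infty$) gives $a_{\infty,\delta}(t)=L+\delta\int_t^\infty \e^{-a_{\infty,\delta}(s)s}(1+s)\,ds$ with $L=\lim_{t\to\infty}a_{\infty,\delta}(t)$; your identity without the $L$ is equivalent to the statement $L=0$ that you are trying to prove. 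The paper supplies the missing ingredient with a comparison argument: it shows that every forward solution with initial datum strictly below $a_{\infty,\delta}(0)$ crosses zero in finite time, identifies $a_{\infty,\delta}(0)$ with the infimal datum $\widebar a$ whose forward solution stays nonnegative, and proves that the critical solution $\widebar a(t)$ must vanish at infinity (otherwise, using the auxiliary estimate that a datum $b_0>b+\delta(1/b+1/b^2)$ keeps the solution above $b$ forever, $\widebar a$ would not be the infimum); domination by $\widebar a(t)$ then concludes.
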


We define  $\mathcal{B}_{\lambda, T}$
the space of function $h(t,x,v)$, defined for $t\in  [0,T]$,
with $N_{\lambda, T}[h]< +\infty$,
and $\mathcal{B}_{\lambda, \infty}$ as the
space of functions $h(t,x,v)$ with $t\in [0,+\infty)$ such that
$N_{\lambda, \infty}[h]<+\infty$, where
$N_{\lambda, \infty}[h]$ is defined in the region
$D_{\lambda, \infty}=\{ (\mu, t)\in [0,\lambda)\times[0, +\infty),
\alpha_\delta^\infty(\mu, t)>0 \}$ with
$\alpha_\delta^\infty(\mu, t)=\lambda - \mu - a_{\infty, \delta}(t)$.

\subsection{Estimates for $\zeta^T$}

As we show more accurately in the following lemma,
eq. \eqref{zetaequa} for the field $\zeta^T$
has the structure of a Volterra equation.
In order to invert the term of order one in the equation, we use the following
classical result about
the theory of Volterra operators.

\begin{Theorem}[\cite{GLS}, p. 45]
  \label{resolvo}
  Given a Volterra equation of the form
  $f(t) + j*f(t) = g(t)$,
  where
  \[
    j*f(t)=\int_0^t j(t-s)f(s) \, ds
  \]
  with $j \in L^1(\mathbb{R}_+)$.  
  The resolvent kernel $r$,
  i.e. the unique solution of the equation
  $$
  r+j * r = j,
  $$
  belongs to $L^1(\mathbb{R}_+)$ if and only if 
  $$
  \mathcal{L}[j](\sigma) \neq -1 \quad \text{for} \quad \Re \sigma \ge 0,
  $$
  where 
  $$\mathcal{L}[j](\sigma)=\int_0^{+\infty} \e^{-\sigma t} j(t) \, dt$$
  is the Laplace transform of $K$.
  The solution $f$ is then given by $f(t)=g(t)-r*g(t)$.
\end{Theorem}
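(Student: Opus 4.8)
The plan is to recast the statement as an invertibility problem in a commutative Banach algebra and then apply Gelfand theory. Convolution makes $L^1(\R_+)$ into a commutative Banach algebra without unit; I adjoin a formal unit $\delta_0$ (the Dirac mass at the origin, which is the identity for $*$) to obtain the unital Banach algebra $A=\C\,\delta_0\oplus L^1(\R_+)$, with product $(c,k)(c',k')=(cc',\,ck'+c'k+k*k')$ and norm $\|(c,k)\|=|c|+\|k\|_{L^1}$. The key observation is that the Volterra equation $f+j*f=g$ reads $(\delta_0+j)*f=g$, while the resolvent equation $r+j*r=j$ says precisely that $\delta_0+j$ is invertible in $A$ with inverse $\delta_0-r$: indeed $(\delta_0+j)*(\delta_0-r)=\delta_0+(j-r-j*r)$, which equals $\delta_0$ iff $r+j*r=j$. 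Thus $r\in L^1(\R_+)$ exists, and is unique by uniqueness of inverses, iff $\delta_0+j$ is invertible in $A$; once this holds, convolving $(\delta_0+j)*f=g$ by $\delta_0-r$ yields $f=g-r*g$, the stated solution formula.

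The necessity (\emph{only if} direction) is immediate. For each $\sigma$ with $\Re\sigma\ge 0$ the Laplace transform $\mathcal{L}[\cdot](\sigma)$ is a continuous algebra homomorphism from $L^1(\R_+)$ (convolution) to $\C$ (multiplication), extending to $A$ by $\mathcal{L}[\delta_0]=1$. Applying it to $r+j*r=j$ gives $\mathcal{L}[r]\,(1+\mathcal{L}[j])=\mathcal{L}[j]$; were $1+\mathcal{L}[j](\sigma_0)=0$ for some $\Re\sigma_0\ge 0$, the left side would vanish while the right equals $-1\neq 0$, a contradiction. Hence $\mathcal{L}[j](\sigma)\neq -1$ on the closed right half-plane.

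For the sufficiency (\emph{if} direction) the crux is to identify the maximal ideal space, equivalently the set of nonzero multiplicative functionals, of $A$. The standard fact, which is the heart of the matter and a Paley--Wiener/Wiener-type result, is that every character of $L^1(\R_+)$ has the form $k\mapsto \mathcal{L}[k](\sigma)$ for a unique $\sigma$ in the closed right half-plane $\{\Re\sigma\ge 0\}$, so the character space of $A$ is this half-plane together with a point at infinity, at which $(c,k)\mapsto c$ (consistent with $\mathcal{L}[k](\sigma)\to 0$ as $|\sigma|\to\infty$ by Riemann--Lebesgue). The Gelfand transform of $\delta_0+j$ therefore takes the value $1+\mathcal{L}[j](\sigma)$ at the character $\sigma$ and the value $1$ at infinity. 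By Gelfand theory for commutative unital Banach algebras, $\delta_0+j$ is invertible iff its Gelfand transform vanishes nowhere on the character space; since the value at infinity is $1\neq 0$, this reduces exactly to $1+\mathcal{L}[j](\sigma)\neq 0$, i.e. $\mathcal{L}[j](\sigma)\neq -1$, for all $\Re\sigma\ge 0$. This is the hypothesis, so $\delta_0+j$ is invertible and the resolvent $r\in L^1(\R_+)$ exists.

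The main obstacle is the characterization of the characters of $L^1(\R_+)$ as Laplace-transform evaluations on the closed right half-plane: this is where the genuine analysis lies, since one must exclude exotic multiplicative functionals and correctly handle both the boundary (the imaginary axis) and the point at infinity. Everything else is algebra (the unitization and the dictionary between resolvent and inverse) together with the elementary necessity argument. Since the statement is quoted verbatim from \cite{GLS}, I would invoke that identification as the cited result and simply assemble the two directions as above.
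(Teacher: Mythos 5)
Your proposal cannot be matched against an in-paper argument, because the paper offers none: Theorem \ref{resolvo} is the classical half-line Paley--Wiener theorem for Volterra resolvents, quoted from \cite{GLS} and used as a black box. Taken on its own terms, your Gelfand-theoretic proof is correct and is the standard ``soft'' route: the unitization $A=\C\,\delta_0\oplus L^1(\R_+)$ is set up properly, the dictionary between the resolvent equation $r+j*r=j$ and the invertibility of $\delta_0+j$ with inverse $\delta_0-r$ is exact (commutativity of convolution gives the two-sided inverse, hence uniqueness of $r$), the necessity argument via the multiplicative functionals $\mathcal L[\cdot](\sigma)$, $\Re\sigma\ge 0$, is fine, and sufficiency reduces, through the Gelfand invertibility criterion, to the one genuinely analytic input you isolate: every character of $L^1(\R_+)$ is a Laplace evaluation at some $\sigma$ with $\Re\sigma\ge 0$. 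Two remarks. First, by citing that characterization you have in effect traded one black box for another; since $(L^1)^*=L^\infty$, the claim follows in a few lines (a character is represented by $\psi\in L^\infty$ with $\psi(t+s)=\psi(t)\psi(s)$ a.e., forcing $\psi(t)=\e^{-\sigma t}$ with $\Re\sigma\ge0$), and including that argument would make your proof genuinely self-contained, which the paper's citation is not. Second, the proof in \cite{GLS} is organized differently---a direct approximation/splitting argument that avoids Banach-algebra theory and therefore extends to measure kernels and weighted $L^1$ spaces---so your route is shorter but less robust; for the scalar $L^1$ kernels $j_{\pm1}$ actually used in this paper (Lemmas on the inversion of $\zeta^T$ and $\zeta$), either proof suffices. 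A last cosmetic point: the final clause ``the solution $f$ is then given by $f=g-r*g$'' should note that convolving $(\delta_0+j)*f=g$ with $\delta_0-r$ is legitimate on the space where $g$ lives (here bounded functions on $[0,T]$), which is immediate but completes the statement.
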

\vskip.3cm
We can now state the inversion lemma. We set
\begin{equation}
  \label{kappa}
  j_n(t)\equiv \imm \frac{n}{2}\widetilde{\eta'}(nt),
\end{equation}
and
\begin{equation}
\label{effeeps}
H^T_{\eps}(t)=\widehat{h^T}_ n(T,nt)
-\frac{\eps}{2} \sum_{k= \pm 1}
\int_t^T \zeta^T_k(s)\widehat{h^T}_{n-k}(s, nt-ks)kn(s-t) \, ds.
\end{equation}
\begin{lemma}
\label{inversione}
Let $\lambda>0$ with $\|h_\infty\|_{\lambda}< +\infty$ and
$\| \eta \|_{\lambda}< +\infty$. 
Assume that
$$
\mathcal{L}[j_1](\sigma) \neq 1, \quad \Re \sigma \ge 0
$$
then
\[
  M_{\lambda, T}[\zeta^T] \le C_\lambda M_{\lambda, T}[H^T_\eps].
\]
\end{lemma}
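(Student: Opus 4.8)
The plan is to read equation \eqref{zetaequa} as a scalar Volterra equation in time for each fixed $n=\pm1$ and to invert its order-one part by Theorem \ref{resolvo}. Moving the linear term to the left-hand side and using the notations \eqref{kappa}, \eqref{effeeps}, equation \eqref{zetaequa} reads
\[
\zeta_n^T(t) + \frac{\imm}{2}n\int_t^T \zeta_n^T(s)\,\widetilde{\eta'}\bigl(n(t-s)\bigr)\,ds = H^T_\eps(t).
\]
Since the integration runs from $t$ to $T$ rather than from $0$ to $t$, I would first perform the time reversal $\tau=T-t$, $\sigma=T-s$, setting $\tilde\zeta(\tau)=\zeta_n^T(T-\tau)$ and $\tilde H(\tau)=H^T_\eps(T-\tau)$. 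A change of variables turns the equation into the standard forward convolution form $\tilde\zeta + J_n*\tilde\zeta=\tilde H$, with kernel $J_n(u)=\frac{\imm}{2}n\,\widetilde{\eta'}(-nu)$, which is exactly the setting of Theorem \ref{resolvo}.

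Before applying the theorem I would record that $J_1\in L^1(\R_+)$: since $\eta$ depends only on $v$ and $\widetilde{\eta'}(\xi)=\imm\xi\,\widetilde\eta(\xi)$, the hypothesis $\|\eta\|_\lambda<+\infty$ gives $|\widetilde{\eta'}(\xi)|\le \|\eta\|_\lambda\,|\xi|\,\e^{-\lambda\langle 0,\xi\rangle}$, so $j_1$, hence $J_1$, decays exponentially and is integrable. Next I would verify that the invertibility hypothesis of Theorem \ref{resolvo}, namely $\mathcal L[J_1](\sigma)\neq -1$ for $\Re\sigma\ge 0$, is equivalent to the assumed $\mathcal L[j_1](\sigma)\neq 1$. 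This is where the sign must be tracked carefully: a change of variable in the Laplace integral together with the reality of $\eta$ yields $J_1=-\overline{j_1}$, hence $\mathcal L[J_1](\sigma)=-\overline{\mathcal L[j_1](\bar\sigma)}$, and since $\bar\sigma$ ranges over the closed right half-plane together with $\sigma$, the two conditions coincide. Theorem \ref{resolvo} then provides a resolvent kernel $R_1\in L^1(\R_+)$ and the representation $\tilde\zeta=\tilde H-R_1*\tilde H$, which, undoing the time reversal, reads
\[
\zeta_n^T(t) = H^T_\eps(t) - \int_t^T R_n(s-t)\,H^T_\eps(s)\,ds .
\]

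It remains to convert this into the weighted bound. Multiplying by $\e^{\lambda t}$ and writing $\e^{\lambda t}H^T_\eps(s)=\e^{-\lambda(s-t)}\,\e^{\lambda s}H^T_\eps(s)$, I would estimate
\[
\e^{\lambda t}|\zeta_n^T(t)| \le M_{\lambda,T}[H^T_\eps]\Bigl(1+\int_0^{\infty}|R_n(u)|\,\e^{-\lambda u}\,du\Bigr),
\]
and since $\e^{-\lambda u}\le 1$ the integral is dominated by $\|R_n\|_{L^1}$. Taking the supremum over $t\in[0,T]$ and over $n=\pm1$ gives the claim with $C_\lambda=1+\|R_1\|_{L^1}$, uniformly in $T$. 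Note that the exponential weight works in our favor, so no decay of the resolvent beyond integrability is required. The main obstacle is the middle step: casting the backward equation exactly into the hypotheses of Theorem \ref{resolvo} and matching the assumed Laplace-transform condition $\mathcal L[j_1]\neq 1$ with the theorem's $\neq -1$ through the time reversal and the symmetry of $\eta$; once the Volterra structure is correctly set up and $J_1\in L^1$ is checked, the remaining estimates are routine.
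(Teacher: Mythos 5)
Your argument is correct and follows essentially the same route as the paper: rewrite \eqref{zetaequa} as a forward Volterra equation by the time reversal $\tau=T-t$, invert the order-one part via Theorem \ref{resolvo}, and bound $M_{\lambda,T}[\zeta^T]$ by $\bigl(1+\|R_1\|_{L^1}\bigr)M_{\lambda,T}[H^T_\eps]$; your sign bookkeeping $J_1=-\overline{j_1}=j_{-1}\cdot(-1)$ and the resulting equivalence of $\mathcal L[J_1]\neq-1$ with $\mathcal L[j_1]\neq 1$ on $\Re\sigma\ge 0$ is accurate. The only (immaterial) difference is that the paper folds the weight $\e^{\lambda t}$ into the unknown \emph{before} inverting, so its kernel is $\jmath_\lambda(t)=-\e^{-\lambda t}j_{-1}(t)$ and its resolvent $r_\lambda$ depends on $\lambda$ (needing the Laplace condition only on $\Re\sigma\ge\lambda$), whereas you invert the unweighted equation and insert the weight afterwards, using the condition on the full half-plane $\Re\sigma\ge 0$ exactly as hypothesized.
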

We notice that the condition on the Laplace transform
is fulfilled also by $j_{-1}$
since $\overline {j_1}=j_{-1}$.
\begin{proof}
Let us define 
$
\phi_\lambda(t)=\e^{\lambda(T-t)} \zeta^T_1(T-t)$,
$F_{\eps}(t)=\e^{\lambda (T-t)} H^T_{\eps}(T-t).
$
Multiplying by $\e^{\lambda t}$, \eqref{zetaequa} can be rewritten as
\begin{equation}
\label{convo}
\phi_\lambda(t)+\jmath_\lambda*  \phi_\lambda(t)= F_{\eps}(t),
\end{equation}
for $t\in [0,T]$, where $\jmath_\lambda(t)=-\e^{-\lambda t}j_{-1}(t)$.
We notice that $\jmath_\lambda \in L^1(\mathbb{R}_+)$
and if $\Re \sigma \ge 0$
$$
\mathcal{L}[\jmath_\lambda](\sigma)=-\mathcal{L}[j_{-1}](\sigma+\lambda)\neq -1.
$$
Then, from Theorem \eqref{resolvo}, the resolvent kernel
$r_{\lambda}$ related to $\jmath_\lambda$ belongs
to $L^1(\mathbb{R}_+)$.
Convolving with $r_{\lambda}$ in \eqref{convo}, we get
$$
\phi_\lambda(t)= F_\eps(t)- \int_0^t r_{\lambda}(t-s)F_{\eps}(s)\, ds.
$$
Taking the absolute values, it holds
$$
M_{\lambda, T}[\zeta^T]=\sup_{t \in [0, T]} |\phi_\lambda(t)| \le M_{\lambda, T}[H^T_\eps]
+  \|r_{\lambda}\|_{L^1(\mathbb{R}_+)} M_{\lambda, T}[H^T_\eps]
$$
and the thesis follow with $C_\lambda=1+
\|r_{\lambda}\|_{L^1(\mathbb{R}_+)}$.
\end{proof}

We now state the main estimate of this section.

\begin{Proposition}
\label{stimadelcampo}
Let $\zeta^T_{\pm 1}$ solution of \eqref{zetaequa} and suppose
$N_{\lambda, T}[h^T]<+\infty$. Then, under the hypothesis of Lemma
\eqref{inversione}, we have
\begin{equation}
\label{stima1}
M_{\lambda, T}[\zeta^T] \le C_\lambda \| h_\infty \|_{\lambda}+  \eps \frac{C_\lambda}{\lambda^2 \sqrt{\lambda-a_{\infty, \delta}(0)}}M_{\lambda, T}[\zeta^T]N_{\lambda, T}[h^T].
\end{equation}
\end{Proposition}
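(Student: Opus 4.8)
The plan is to invoke the inversion Lemma \eqref{inversione} to reduce the whole estimate to a bound on $M_{\lambda,T}[H^T_\eps]$, and then to treat separately the two contributions appearing in the definition \eqref{effeeps} of $H^T_\eps$, namely the final datum $\widehat{h^T}_n(T,nt)$ and the bilinear (echo) integral. Since Lemma \eqref{inversione} gives $M_{\lambda,T}[\zeta^T]\le C_\lambda M_{\lambda,T}[H^T_\eps]$ under the standing hypothesis on the Laplace transform, it is enough to prove that $M_{\lambda,T}[H^T_\eps]\le \|h_\infty\|_\lambda + \eps\,\lambda^{-2}(\lambda-a_{\infty,\delta}(0))^{-1/2}M_{\lambda,T}[\zeta^T]\,N_{\lambda,T}[h^T]$, and then multiply through by $C_\lambda$ to obtain \eqref{stima1}.

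For the linear part I would use the final condition $h^T(T)=h_\infty$ to replace $\widehat{h^T}_n(T,nt)$ by $\widehat{h_\infty}_n(nt)$ and estimate it with the analytic norm \eqref{analytic}, so that $|\widehat{h_\infty}_n(nt)|\le \e^{-\lambda\langle n,nt\rangle}\|h_\infty\|_\lambda$. For the relevant modes $n=\pm1$ one has $\langle n,nt\rangle=\sqrt{2+t^2}\ge t$, hence $\e^{\lambda t}|\widehat{h^T}_n(T,nt)|\le \|h_\infty\|_\lambda$ uniformly for $t\in[0,T]$; taking the supremum over $t$ produces the first summand $\|h_\infty\|_\lambda$.

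The core is the bilinear term. After multiplying by $\e^{\lambda t}$ and moving the absolute value inside the integral, I would bound $|\zeta^T_k(s)|\le \e^{-\lambda s}M_{\lambda,T}[\zeta^T]$ and use $|kn|=1$, which yields the integrable prefactor $\e^{-\lambda(s-t)}M_{\lambda,T}[\zeta^T]$; the transported mode is controlled by $|\widehat{h^T}_{n-k}(s,nt-ks)|\le \e^{-\mu\langle n-k,nt-ks\rangle}\|h^T(s)\|_\mu$. The decisive and simplest possible choice here is $\mu=0$: the analytic weight is then merely $\le 1$, while $\|h^T(s)\|_0\le \alpha_\delta^T(0,s)^{-1/2}N_{\lambda,T}[h^T]$ by the definition \eqref{CK} of the weighted norm. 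Using that $a_{T,\delta}$ is decreasing in time and $a_{T,\delta}(0)\le a_{\infty,\delta}(0)$ from Lemma \eqref{lemma:a}, I would bound $\alpha_\delta^T(0,s)=\lambda-a_{T,\delta}(s)\ge \lambda-a_{\infty,\delta}(0)$, so that $\|h^T(s)\|_0\le (\lambda-a_{\infty,\delta}(0))^{-1/2}N_{\lambda,T}[h^T]$ uniformly in $s$. What remains is the elementary scalar integral $\int_t^T \e^{-\lambda(s-t)}(s-t)\,ds\le \int_0^{+\infty}\e^{-\lambda\tau}\tau\,d\tau=\lambda^{-2}$; summing over the two values $k=\pm1$ supplies a factor $2$ that cancels the prefactor $\eps/2$, leaving exactly $\eps\,\lambda^{-2}(\lambda-a_{\infty,\delta}(0))^{-1/2}M_{\lambda,T}[\zeta^T]\,N_{\lambda,T}[h^T]$.

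The step I expect to require the most care is not an echo estimate: in this perturbative regime the crude choice $\mu=0$ deliberately discards the analytic decay of the transported mode, which is precisely what makes the resulting bound clean and $T$-independent. The genuine point is the uniformity in $T$, namely that $(0,s)\in D_{\lambda,T}$ for every $s\in[0,T]$ and that the lower bound $\alpha_\delta^T(0,s)\ge \lambda-a_{\infty,\delta}(0)>0$ holds independently of $T$; both rely on the monotonicity of $a_{T,\delta}$ in time and in $T$ established in Lemma \eqref{lemma:a}, together with the smallness of $\delta$ guaranteeing $a_{\infty,\delta}(0)<\lambda$. The finer treatment of the resonant contributions, where retaining $\mu>0$ becomes essential, is what is postponed to the non-perturbative analysis of the following section.
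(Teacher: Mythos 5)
Your proposal is correct and follows essentially the same route as the paper: reduce via Lemma \eqref{inversione} to bounding $M_{\lambda,T}[H^T_\eps]$, estimate the final-datum term by $\|h_\infty\|_\lambda$, and in the bilinear term take $\mu'=0$, use the monotonicity of $a_{T,\delta}$ in $t$ and $T$ to get the uniform lower bound $\alpha^T_\delta(0,s)\ge\lambda-a_{\infty,\delta}(0)$, and conclude with $\int_t^T\e^{-\lambda(s-t)}(s-t)\,ds\le\lambda^{-2}$. Your write-up merely makes explicit a few steps the paper leaves implicit (the bound $\langle \pm 1,\pm t\rangle\ge t$ for the linear term and the cancellation of the factor $2$ against $\eps/2$).
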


\begin{proof}
  From
  Lemma \eqref{inversione} we
  need only to estimate $N_{\lambda, T}[H^T_\eps]$.
  Being $h^T(T,x,v)=h_\infty(x,v)$, from \eqref{effeeps} we have
  \begin{equation}
    \label{eqdastimare}
	\begin{split}
    \e^{\lambda t}|H^T_{\eps}(t)|
    &\le \| h_{\infty} \|_{\lambda}\\
&+\eps M_{\lambda, T}[\zeta^T]N_{\lambda, T}[h^T] \sum_{k= \pm 1}\int_t^T\frac{ \e^{-\lambda(s-t)-\mu'\left \langle n-k, nt-ks \right \rangle}}
    {\alpha^T(\mu',s)^{1/2}}
   (s-t) \, ds,
\end{split}
  \end{equation}
  for any $\mu'<\lambda - a_{T, \delta}(s)$. 
  Then, by choosing $\mu' = 0$, and 
  using that $a_{T,\delta}(s)\le
  a_{T,\delta}(0) <   a_{\infty,\delta}(0)$
  we get
  \[
  \e^{\lambda t}|H^T_{\eps}(t)|\le \| h_\infty \|_{\lambda}+
  \eps\frac{M_{\lambda, T}[\zeta^T]N_{\lambda, T}[h^T]}
  {(\lambda-a_{\infty, \delta}(0))^{1/2}} \int_t^T
  \e^{-\lambda(s-t)}(s-t) \, ds.
\]
\end{proof}

\subsection{Estimates for $h^T$}
Now we turn to give a Cauchy-Kovalevskaya estimate on $h^T$.
Due to the loss of analytic regularity in time, it is crucial to use the weighted-norm introduced in \eqref{CK}.
\begin{Proposition}
\label{due}
Let $h^T$ a solution of \eqref{tempoT} and assume $M_{\lambda, T}[\zeta^T]<
+\infty$ then the following estimate holds:
\begin{equation}
\label{stima2}
N_{\lambda, T}[h^T]\le C\|h_\infty\|_{\lambda}+\frac{C}{\delta}M_{\lambda, T}[\zeta^T]\|\eta\|_\lambda+\eps \frac{C}{\delta}M_{\lambda, T}[\zeta^T]N_{\lambda, T}[h^T].
\end{equation}
\end{Proposition}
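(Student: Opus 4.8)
The plan is to turn the Fourier system \eqref{differenziale} into a closed \emph{a priori} inequality for the Cauchy--Kovalevskaya norm \eqref{CK}. First I would integrate \eqref{differenziale} backward in time from the final datum $\widehat{h^T}_n(T,\xi)=\widehat{h_\infty}_n(\xi)$, obtaining
\[
\widehat{h^T}_n(t,\xi)=\widehat{h_\infty}_n(\xi)-\int_t^T\!\Bigl(\delta_{n,\pm1}\,n\tfrac{\imm}{2}\zeta^T_n(s)\,\widetilde{\eta'}(\xi-ns)-\eps\!\!\sum_{k=\pm1}\! k\tfrac{\zeta^T_k(s)}{2}\,\widehat{h^T}_{n-k}(s,\xi-ks)(\xi-ns)\Bigr)ds.
\]
Then I would fix $(\mu,t)\in D_{\lambda,T}$, multiply by $\alpha_\delta^T(\mu,t)^{1/2}\e^{\mu\langle n,\xi\rangle}$, take moduli, and split the estimate into three contributions: the final datum, the linear term in $\eta'$, and the nonlinear term. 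The datum is harmless, since $\mu\le\lambda$ and $\alpha_\delta^T\le\lambda$ give $\alpha_\delta^T(\mu,t)^{1/2}\e^{\mu\langle n,\xi\rangle}|\widehat{h_\infty}_n(\xi)|\le\sqrt{\lambda}\,\|h_\infty\|_\lambda$, which is the term $C\|h_\infty\|_\lambda$ in \eqref{stima2}.

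For the other two terms the starting point is the decay of the field, $|\zeta^T_k(s)|\le\e^{-\lambda s}M_{\lambda,T}[\zeta^T]$, which I would read as an \emph{echo-localized} bound $\e^{-\lambda s}\le\e^{\lambda\sqrt2}\,\e^{-\lambda\langle k,ks\rangle}$, with $\langle k,ks\rangle=\sqrt{2+s^2}$. Combining this with $|\widetilde{\eta'}(\xi-ns)|\le|\xi-ns|\,\e^{-\lambda\langle0,\xi-ns\rangle}\|\eta\|_\lambda$ for the linear term, and with $|\widehat{h^T}_{n-k}(s,\xi-ks)|\le\alpha_\delta^T(\mu',s)^{-1/2}\e^{-\mu'\langle n-k,\xi-ks\rangle}N_{\lambda,T}[h^T]$ for the nonlinear one (for an intermediate regularity $\mu'=\mu'(s)$ with $(\mu',s)\in D_{\lambda,T}$), I would use the triangle inequality $\langle n,\xi\rangle\le\langle n-k,\xi-ks\rangle+\langle k,ks\rangle$ to produce genuine exponential decay in \emph{both} frequencies, namely $\e^{-(\lambda-\mu)\langle k,ks\rangle}$ and $\e^{-(\mu'-\mu)\langle n-k,\xi-ks\rangle}$. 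The derivative loss $|\xi-ns|\le\langle n-k,\xi-ks\rangle+2\langle k,ks\rangle$ is then absorbed against these exponentials through $x\,\e^{-cx}\le(ec)^{-1}$, the first summand by the regularity gain $\mu'-\mu$, the second by the margin $\lambda-\mu$.

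What remains, and is the heart of the matter, is to control uniformly over $(\mu,t)\in D_{\lambda,T}$ a time integral of the schematic form
\[
\alpha_\delta^T(\mu,t)^{1/2}\int_t^T\frac{1}{(\mu'-\mu)\,\alpha_\delta^T(\mu',s)^{1/2}}\,\e^{-(\lambda-\mu)\langle k,ks\rangle}\,ds,
\]
together with the lower-order term carrying $\langle k,ks\rangle$ in place of $(\mu'-\mu)^{-1}$. Here I would make the Cauchy--Kovalevskaya choice $\mu'(s)$ determined by $\alpha_\delta^T(\mu',s)=\tfrac12(\lambda-a_{T,\delta}(s)-\mu)$, so that $\mu'-\mu=\alpha_\delta^T(\mu',s)\ge\tfrac12\alpha_\delta^T(\mu,t)$ and the integrand collapses to $\alpha_\delta^T(\mu,t)^{1/2}\,\alpha_\delta^T(\mu',s)^{-3/2}\,\e^{-(\lambda-\mu)\langle k,ks\rangle}$. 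Since $\alpha_\delta^T(\mu',s)=\tfrac12\bigl(\alpha_\delta^T(\mu,t)+a_{T,\delta}(t)-a_{T,\delta}(s)\bigr)$ grows as $s$ moves away from $t$, it is exactly the weight \emph{power} $1/2$ that renders $\int_t^T\alpha_\delta^T(\mu',s)^{-3/2}\,ds$ controllable by $\alpha_\delta^T(\mu,t)^{-1/2}/(-\dot a_{T,\delta}(t))$ after the change of variables $s\mapsto a_{T,\delta}(s)$; invoking the defining ODE \eqref{eqdiff}, $-\dot a_{T,\delta}(t)=\delta(1+t)\e^{-a_{T,\delta}(t)t}$, and bounding $\e^{-(\lambda-\mu)\langle k,ks\rangle}\le\e^{-(\lambda-\mu)t}$, one arrives at the clean estimate $\e^{-\alpha_\delta^T(\mu,t)\,t}/(\delta(1+t))\le 1/\delta$, in which the factor $\e^{a_{T,\delta}(t)t}$ supplied by the ODE cancels the echo cross-term. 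This is precisely why \eqref{eqdiff} carries the factors $(1+t)$ and $\e^{-at}$: they are tailored to the degree-one derivative loss and to the echo, respectively.

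The main obstacle is exactly this last step, namely verifying that the bound stays uniform on all of $D_{\lambda,T}$ as $\alpha_\delta^T(\mu,t)\to0$, at the top of the admissible regularity strip, where the gain $\mu'-\mu$ degenerates and only the interplay of the $1/2$-power weight with the ODE-matched factors keeps the integral finite. Running the same computation on the linear term produces $\tfrac{C}{\delta}M_{\lambda,T}[\zeta^T]\|\eta\|_\lambda$ and on the nonlinear one $\eps\tfrac{C}{\delta}M_{\lambda,T}[\zeta^T]N_{\lambda,T}[h^T]$; taking the supremum first over $n,\xi$ and then over $(\mu,t)\in D_{\lambda,T}$ gives \eqref{stima2}. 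Throughout, the monotonicity and smallness of $a_{T,\delta}$ from Lemma~\ref{lemma:a} guarantee that the chosen $\mu'(s)$ remains in $[0,\lambda)$ and that all the weights stay positive.
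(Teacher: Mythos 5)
Your proposal is correct and follows essentially the same route as the paper: integral form from the final datum, the triangle inequality on the weights $\e^{\mu\langle n,\xi\rangle}$, the midpoint choice $\mu'(s)=\tfrac12(\lambda+\mu-a_{T,\delta}(s))$ to absorb the derivative loss $|\xi-ns|$, and the observation that the ODE \eqref{eqdiff} makes the resulting integrand (essentially) a total derivative of $\alpha_\delta^T(\mu,\cdot)^{-1/2}$, yielding the $C/\delta$ factors. Your change of variables $s\mapsto a_{T,\delta}(s)$ is just an equivalent packaging of the paper's step $\e^{-(\lambda-\mu)s}(1+s)\,\alpha^{-3/2}\le -\tfrac{2}{\delta}\tfrac{\de}{\de s}\alpha^{-1/2}$, and the linear term in $\eta'$ comes out the same way (with a harmless logarithm).
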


\begin{proof}
Fixing  $\mu<\lambda- a_{T,\delta}(t)$, from \eqref{differenziale} we get 
\begin{equation}
\label{lungo}
e^{\mu \left \langle n, \xi \right \rangle} |\widehat{h^T}_n(t,\xi)| \le\|h_\infty\|_{\lambda} +\e^{\mu  \left \langle n, \xi \right \rangle}|D^T_n(t,\xi)|+\e^{\mu \left \langle n, \xi \right \rangle}|E^T_n(t,\xi)|
\end{equation}
where
\begin{equation*}
D^T_n(t,\xi)=\delta_{n,\pm 1}\frac{\imm}{2}n \int_t^T \zeta^T_n(s) \widetilde{\eta'}(\xi-ns) \, ds
\end{equation*}
and
\begin{equation*}
E^T_n(t,\xi)\equiv \frac{\eps}{2} \sum_{k= \pm 1}\int_t^T \zeta^T_k(s)\widehat{h^T}_{n-k}
  (s,\xi-ks)k(\xi-ns) \, ds.
\end{equation*}
We estimate separately the two terms.
As regards  $E_n$,  since
\begin{equation*}
  \e^{\mu\left \langle n, \xi \right \rangle} \le
  \e^{\mu \left \langle n-k, \xi - ks\right \rangle}
  \e^{\mu \left \langle k, ks \right \rangle},
\end{equation*}
by the triangular inequality and taking
$$ \mu(s)=\frac{\lambda+\mu - a_{T,\delta}(s)}{2},$$
{\it i.e.} the middle point between $\mu$ and $\lambda - a_{T, \delta}(s)$,
we have
\begin{equation*}
\begin{split}
\e^{\mu \left \langle n, \xi \right \rangle}|E^T_n(t,\xi)&|\le\sum_{k=\pm1} M_{\lambda, T}[\zeta^T]\times\\
&\times \int_t^T\e^{-(\lambda-\mu) s}\|h^T(s)\|_{\mu(s)} \e^{- (\mu(s)-\mu)\left \langle n-k, \xi-ks \right \rangle} |\xi-ns|\, ds, 
\end{split}
\end{equation*}
where we have also used that $\left \langle \pm 1 , \pm s \right \rangle
\le C + s$.
Noting that
\begin{equation*}
\e^{-(\mu(s)-\mu)\left \langle n-k,\xi-ks\right \rangle} |\xi-ns|\le\frac{2(1+s)}{\lambda -\mu- a_{T, \delta}(s)},
\end{equation*}
we get
\begin{equation}
\label{switchDelta}
\e^{\mu \left \langle n, \xi \right \rangle}|E^T_n(t,\xi)| \le \frac{C}{\delta}M_{\lambda, T}[\zeta^T]N_{\lambda, T}[h^T] \int_t^T   \frac{\delta \e^{-(\lambda-\mu)s}(1+s)}{\alpha^T(\mu,s)^{3/2}} \, ds.
\end{equation}
Being $\lambda-\mu>a_{T,\delta}(s)$ and using the definition of $a_{T, \delta}$
in \eqref{eqdiff}
$$
\frac {\e^{-(\lambda-\mu)s}(1+s)}{\alpha^T(\mu,s)^{3/2}} \le -\frac 2{\delta}
\frac{\de}{\de s} \alpha^T(\mu,s)^{-1/2}
$$
and then
$$
\e^{\mu \left \langle n, \xi \right \rangle}|E_n(t,\xi)| \le \frac{C}{\delta}
M_{\lambda, T}[\zeta^T] N_{\lambda, T}[h^T] \frac{1}{\alpha^T(\mu,t)^{1/2}}.
$$
As regards $D^T_n$,
for $\mu<\lambda-a_{T,\delta}(t)$,
\begin{equation*}
\begin{split}
\e^{\mu \left \langle n, \xi \right \rangle}|D^T_n(t,\xi)|&\le  C M_{\lambda, T}[\zeta^T]\|\eta\|_\lambda \int_t^T \e^{-(\lambda-\mu) s}\e^{-(\lambda-\mu) \left \langle \xi-ns \right \rangle}\left \langle \xi-ns \right \rangle\,ds \\
&\le  \frac{C}{\delta} M_{\lambda, T}[\zeta^T] \| \eta\|_{\lambda}\int_t^T \frac{\delta\e^{-a_{T,\delta}(s) s}(1+s)}{\alpha^T(\mu,s)}\, ds
\end{split}
\end{equation*}
where in the last inequality we have used that $\lambda-\mu > 
\lambda-\mu - a_{T,\delta}(s)= \alpha^T(\mu, s)$
and also that $\lambda-\mu>a_{T,\delta}(s)$.
Computing the integral,
we get
$$
\e^{\mu \left \langle n, \xi \right \rangle}|D^T_n(t,\xi)|\le \frac{C}{\delta} M_{\lambda, T}[\zeta^T] \| \eta\|_{\lambda} \ln \Bigl{(} \frac{\alpha^T(\mu,T)}{\alpha^T(\mu,t)} \Bigr{)}.
$$
We conclude the proof multiplying \eqref{lungo} by $\alpha^T(\mu,t)^{1/2}$,
and taking the supremum over $D_{\lambda, T}$.
\end{proof}

\subsection{The backward result}
\begin{Theorem}
\label{MAIN}
Let $h_\infty\in L^1(S^1 \times \R)$ analytic such that $\|h_\infty\|_{\lambda}<+\infty$ with $\lambda>0$ . Consider $\eta \in L^1(\mathbb{R})$ analytic such that $\|\eta\|_\lambda<+\infty$. Moreover, assume
$$
\mathcal{L}[j_1](\sigma) \neq 1, \quad \Re \sigma \ge 0,
$$
with $j_1$ as in \eqref{kappa}.
Then, for small values of $\eps$, there exists a unique solution $h(t,x,v)$ of \eqref{principale} with $N_{\lambda, \infty}[h]<+\infty$ such that 
\begin{equation*}
\lim_{t \rightarrow +\infty}\| h(t,x,v)-h_\infty(x,v)\|_{\infty}=0
\end{equation*}
with exponential rate.
\end{Theorem}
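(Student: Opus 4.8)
The plan is to obtain the asymptotic solution $h$ as the limit, for $T\to+\infty$, of the finite-time solutions $h^T$ of \eqref{tempoT}. The whole argument rests on the fact that the a priori bounds of Propositions \ref{stimadelcampo} and \ref{due} are \emph{uniform} in $T$: this is exactly the content of Lemma \ref{lemma:a}, since $a_{T,\delta}(0)<a_{\infty,\delta}(0)$ for every $T$, so that the constants appearing in \eqref{stima1}--\eqref{stima2} do not deteriorate as $T$ grows.

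First I would construct $h^T$ for each fixed $T$. Integrating \eqref{differenziale} backward from the datum $h^T(T)=h_\infty$ gives the representation
\begin{equation}
\label{rappr}
\widehat{h^T}_n(t,\xi)=\widehat{h_\infty}_n(\xi)-\int_t^T \partial_s\widehat{h^T}_n(s,\xi)\,ds,
\end{equation}
which, coupled with the inverted Volterra equation for $\zeta^T$ provided by Lemma \ref{inversione}, defines a map $\Phi$ on the coupled pair $(\zeta^T,h^T)$ (recall that \eqref{eq:zita} is recovered a posteriori). Propositions \ref{stimadelcampo} and \ref{due} show that, for $\eps$ small and radii $R_1\sim\|h_\infty\|_\lambda$ and $R_2\sim\|h_\infty\|_\lambda+\|\eta\|_\lambda$, $\Phi$ maps the ball $\{M_{\lambda,T}\le R_1\}\times\{N_{\lambda,T}\le R_2\}$ into itself; a difference version of the same estimates makes $\Phi$ a contraction, so that Banach's fixed point theorem yields a unique $h^T\in\mathcal{B}_{\lambda,T}$ with radii independent of $T$. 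Writing $M=M_{\lambda,T}[\zeta^T]$ and $N=N_{\lambda,T}[h^T]$, estimates \eqref{stima1}--\eqref{stima2} take the form $M\le A+\eps c_1 MN$ and $N\le B+c_2 M+\eps c_3 MN$, and a short bootstrap shows that below an explicit threshold on $\eps$ one has $M\le 2A$ and $N\le 2(B+2c_2A)$, uniformly in $T$.

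Next I would pass to the limit. The uniform bounds give equiboundedness of the modes $\widehat{h^T}_n(t,\xi)$, equation \eqref{differenziale} together with the bound on $M$ gives equicontinuity in $t$ (and, via the analytic weight, in $\xi$), while $\e^{\mu\left\langle n,\xi\right\rangle}$ controls the tails in $(n,\xi)$; by Arzel\`a--Ascoli and a diagonal extraction one gets a subsequence $h^{T_j}\to h$, locally uniformly, with $N_{\lambda,\infty}[h]<+\infty$ and $h$ solving \eqref{principale}. Letting $T\to+\infty$ in \eqref{rappr} (the tail being finite since $|\zeta_n(s)|\le M\,\e^{-\lambda s}$) gives
\begin{equation*}
\widehat{h}_n(t,\xi)=\widehat{h_\infty}_n(\xi)-\int_t^{+\infty}\partial_s\widehat{h}_n(s,\xi)\,ds .
\end{equation*}
From this representation the asymptotic datum and its rate are immediate: $\|h(t)-h_\infty\|_\infty=\sup_{n,\xi}\bigl|\int_t^{+\infty}\partial_s\widehat{h}_n(s,\xi)\,ds\bigr|$, and every term of $\partial_s\widehat{h}_n$ carries a factor $\e^{-\lambda s}$ (directly from $\zeta_n$ in the linear term and from $\zeta_k$ in the nonlinear one), so the tail is $O(\e^{-\lambda t})$. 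Uniqueness on $[0,+\infty)$ follows by subtracting two solutions sharing the same $h_\infty$ and re-running the difference estimates, which for small $\eps$ force the difference to vanish.

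I expect the passage to the limit to be the main obstacle: one has to extract $h$ with enough compactness while simultaneously keeping the analytic norm $N_{\lambda,\infty}[h]$ finite and preserving the prescribed asymptotic state $h_\infty$. It is precisely the $T$-uniform estimates --- hence Lemma \ref{lemma:a} --- that make the extraction possible, and it is the backward representation \eqref{rappr} that converts the \emph{final} datum at time $T$ into the \emph{asymptotic} datum at $+\infty$.
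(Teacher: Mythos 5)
Your proposal is correct and its overall architecture coincides with the paper's: solve the backward finite-horizon problems \eqref{tempoT}, use Lemma \ref{lemma:a} to make the a priori bounds of Propositions \ref{stimadelcampo} and \ref{due} uniform in $T$, pass to the limit $T\to+\infty$, and obtain uniqueness and the exponential rate from the difference estimates and the integral tail $\int_t^{+\infty}\partial_s\widehat{h}_n\,ds$. The one genuine divergence is the mechanism of the limit in $T$. You extract a subsequential limit by Arzel\`a--Ascoli and then identify it through uniqueness; this is logically sound, since any subsequential limit inherits $N_{\lambda,\infty}[h]<+\infty$ by lower semicontinuity of the sup norm and attains the asymptotic datum via the dominated tail integral (note that equicontinuity in $t$ requires bounding $\partial_t\widehat{h^T}_n$, which costs a slight relaxation of the weight --- the paper does exactly this by taking $\delta'>\delta$). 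The paper instead extends $h^T\equiv h_\infty$ for $t\ge T$ and proves that the \emph{whole} family $\{h^T\}$ is Cauchy in $\mathcal{B}_{\lambda',\infty}$ for $a_{\infty,\delta}(0)<\lambda'<\lambda$, by writing $\widehat{h^{T'}}_n-\widehat{h^T}_n$ explicitly and re-running the estimates \eqref{stima1}--\eqref{stima2} with an explicit remainder $O(\e^{-(\lambda-\lambda')T^*/2})$ as in \eqref{stimacauchy}. The Cauchy argument buys convergence of the full family with a quantitative rate in $T$ and no appeal to compactness or diagonal extraction, at the price of giving up a bit of the analyticity index ($\lambda\to\lambda'$); your route avoids that loss but delivers only subsequential convergence until uniqueness is invoked. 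A second, minor difference: for fixed $T$ you set up a contraction on the coupled pair $(\zeta^T,h^T)$, while the paper runs the linear iteration \eqref{iterative1} and extracts a limit from the uniformly bounded iterates; both close under the same smallness condition on $\eps\|h_\infty\|_\lambda$.
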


\begin{proof}
For every $T$ 
we get the unique solution $h^{T}$ of \eqref{tempoT} using the
following iterative procedure. For $j \ge 0$ and $0 \le t \le T$ let
\begin{equation}
\begin{split}
\label{iterative1}
\partial_t \widehat{h}^{(j+1), T}_n(t, \xi)&=\delta_{n, \pm 1}n\frac{\imm }{2}\zeta^{(j), T}_n(t)\widetilde{\eta'}(\xi-nt)\\
&- \eps  \sum_{k= \pm 1}k\frac{\zeta^{(j), T}_k(t)}{2}\widehat{h}^{(j+1), T}_{n-k}(t, \xi-kt)(\xi -nt),
\end{split}
\end{equation}
where $\zeta^{(j),T}_1(t)$ is defined by
\begin{equation*}
\begin{split}
\zeta^{(j), T}_1(t)&=\widehat{h}_{\infty}(1,t)-\frac{\imm}{2} \int_t^T \zeta^{(j), T}_1(s)\widetilde{\eta'}(t-s)\, ds\\
&- \frac{\eps}{2} \sum_{k= \pm 1}\int_t^T \zeta^{(j), T}_k(s)\widehat{h}^{(j), T}_{n-k}(s, t-ks)k(s-t) \, ds, \\
\end{split}
\end{equation*}
where $\zeta^{(j),T}_{-1}=\widebar{\zeta_1^{(j),T}}$ and with initial step $ h^{(0), T}(t,x,v)=h_\infty(x,v)$. \\
Then $h^{(j),T}$ verifies the same bounds of the {\it a priori} estimates in \eqref{stima1} and \eqref{stima2}:
\begin{equation*}
M_{\lambda, T}[\zeta^{(j), T}] \le C\| h_\infty \|_{\lambda}+  \eps C M_{\lambda, T}[\zeta^{(j), T}]N_{\lambda, T}[h^{(j), T}]
\end{equation*}
and
\begin{equation*}
\begin{split}
N_{\lambda, T}[&h^{(j+1), T}]\le C\|h_\infty\|_{\lambda}+CM_{\lambda, T}[\zeta^{(j), T}]\Bigl{(}\|\eta\|_\lambda+\eps N_{\lambda, T}[h^{(j+1), T}]\Bigr{)}\\
& \le C\| h_\infty \|_{\lambda}+\eps C M_{\lambda, T}[\zeta^{(j),T}]\Bigl{(} N_{\lambda, T}[h^{(j),T}]+ N_{\lambda, T}[h^{(j+1),T}] \Bigr{)},
\end{split}
\end{equation*}
where we have used \eqref{stima1} in the last inequality and where $C$ is a
generic constant depending on $\lambda$ and $\delta$.
Since $N_{\lambda, T}[h^{(0),T}] \le C \|h_\infty\|_\lambda$, taking $\eps  \|h_\infty\|_\lambda$ sufficiently small, we get that $M_{\lambda, T}[\zeta^{(j), T}]$ and $N_{\lambda, T}[h^{(j+1), T}]$ are uniformly bounded in $j\ge 0$. 
Then, taking $\delta'>\delta$ in \eqref{lemma:a}, the time derivative of $h^{(j),T}$ is uniformly bounded in $N_{\lambda,T}[\cdot]$. Hence there exists a subsequence $h^{(j_k), T}$ which converge to a function $h^T \in \mathcal{B}_{\lambda, T}$, while $\zeta_{\pm 1}^{(j_k), T}$ converge to a function $\zeta_{\pm 1}^T$ such that $M_{\lambda, T}[\zeta^T]< +\infty$.
Then $h^T_n(t,nt)=\zeta^{T}_n(t)$ for $n=\pm 1$ and it is a solution of the nonlinear problem \eqref{tempoT}. \\
We now extend  $h^{T}(t,x,v)=h_\infty(x,v)$ for $t \ge T$ and we consider the sequence of solutions $\{h^T\}$, with $h^T \in \mathcal{B}_{\lambda, \infty}$. We can see that $h^T$ fulfills the Cauchy property as a function of $T$ in $\mathcal{B}_{\lambda',\infty}$ with $\lambda>\lambda'>a_{\delta, \infty}(0)$. In fact, fixed $T^*$, taking $T'\ge T \ge T^*$, we have for $t\le T$
\begin{equation*}
\begin{split}
&\widehat{h^{T'}}_n(t, \xi) - \widehat{h^T}_n(t, \xi)=\delta_{n, \pm 1}n\frac{\imm}{2}\int_t^T \Bigl{(} \zeta^T_n(s)-\zeta^{T'}_n(s) \Bigr{)}\widetilde{\eta'}(\xi-ns)\, ds\\
& - \eps  \sum_{k= \pm 1}k \int_t^T \frac{\Bigl{(} \zeta^T_k(s)-\zeta^{T'}_k(s)\Bigr{)}}{2}\widehat{h^T}_{n-k}(s, \xi-ks)(\xi -ns)\, ds\\
& - \eps  \sum_{k= \pm 1}k \int_t^T \frac{\zeta^{T'}_k(s)}{2}\Bigl{(} \widehat{h^T}_{n-k}(s, \xi-ks)-\widehat{h^{T'}}_{n-k}(s, \xi-ks)\Bigr{)}(\xi -ns)\,ds\\
& + \delta_{n, \pm 1}n\frac{\imm}{2}\int_T^{T'} \zeta^{T'}_n(s)\widetilde{\eta'}(\xi-ns)\, ds \\
& + \eps  \sum_{k= \pm 1}k \int_T^{T'} \frac{\zeta^{T'}_k(s)}{2}\widehat{h^{T'}}_{n-k}(s, \xi-ks)(\xi -ns)\,ds
\end{split}
\end{equation*}
and an analogous of equation \eqref{zetaequa} holds
for $\zeta^T-\zeta^{T'}$. Doing  estimates
in the style of \eqref{stima1} and \eqref{stima2}, we get
\begin{equation*}
M_{\lambda', T}[\zeta^{T'}-\zeta^{T}] \le\eps C M_{\lambda', T}[\zeta^{T'}-\zeta^{T}]+\eps CN_{\lambda', \infty}[h^{T'}-h^{T}]+\eps \frac{C}{{\lambda'}^2} \e^{-(\lambda-\lambda')T^{*}}
\end{equation*}
and
\begin{equation}
\label{stimacauchy}
\begin{split}
N_{\lambda', \infty}[h^{T'}-h^{T}]&\le C M_{\lambda', T}[\zeta^{T'}-\zeta^{T}]+\eps C  M_{\lambda', T}[\zeta^{T'}-\zeta^{T}]\\
&+\eps C N_{\lambda', \infty}[h^{T'}-h^{T}]+\frac{(1+\eps)C}{\min\{1,\lambda-\lambda'\}^3}\e^{-\frac{(\lambda-\lambda')}{2}T^{*}}.
\end{split}
\end{equation}
Hence, using again the smallness of $\eps$, we conclude that
\[
  \lim_{T^*\to +\infty}\sup_{T'\ge T \ge T^*} N_{\lambda', \infty}[h^{T'}-h^{T}] = 0.
\]
Being uniformly bounded in $\mathcal{B}_{\lambda, \infty}$, the sequence $\{h^T\}$ converge to a function $h \in \mathcal{B}_{\lambda, \infty}$
and, passing to the limit by dominated convergence in the integral formulation, $h(t,x,v)$ is solution of the nonlinear equation \eqref{principale} in
$[0,+\infty)$.
So, taking $\widebar{\mu}<\lambda- a_{\infty,\delta}(0)$, we have that $\|h(t,x,v)-h_{\infty}(x,v)\|_{\widebar{\mu}} \to 0$.\\
We get the uniqueness of the solutions with a similar procedure.
Let $g(t,x,v)$ and $h(t,x,v)$
be two solutions of \eqref{principale} with the same asymptotic datum
$h_\infty$. Proceeding as before, we can prove that
they verify the estimates \eqref{stima1} and \eqref{stima2}.
Hence, denoting $\zeta_h$ the electric field associated to $h$, we get
$$
\max \Bigl{(} {N_{\lambda,\infty}[h], M_{\lambda,\infty}[\zeta_h]}
\Bigr{)}\le C \|h_\infty\|_\lambda
$$ 
and analogously for $g(t,x,v)$.
Estimating $N_{\lambda, \infty}[g-h]$, we obtain the same estimates as in \eqref{stimacauchy} without the rest terms:
$$
A\equiv \max \Bigl{(} {N_{\lambda,\infty}[g-h], M_{\lambda,\infty}[\zeta_g-\zeta_h]} \Bigr{)}\le C(\eps) A.
$$ 
Using the smallness on $\eps$ as before, we have $C(\eps)<1$,
from which the uniqueness follows.

We remark that in \cite{CM}, 
in the case of the scattering problem for the Vlasov-Poisson equation,
the uniqueness is guaranteed 
for a wider class of solutions,
not necessarily analytic.
\end{proof}

\section{Non-perturbative regime}

Using the backward approach for large times it is possible to construct solutions without perturbating around the homogeneous equilibrium $\eta(v)$, in the style of \cite{CM}. The price to pay is that the
analytic estimates hold only in $[\tau,+\infty)$ for  
$\tau$ large enough.

Fixed  an analytic asymptotic state $\omega(x,v)$, consider \eqref{HMF}
and write
\[
f(t,x,v)=\widebar{\omega} (v)+g(t,x,v),
\] 
where $\widebar{\omega}$ is the mean of $\omega(x,v)$ with respect to the $x$ variable. 
Then $h(t,x,v)=g(t,x+vt,v)$ verifies the equation
\begin{equation*}
\partial_t h=\{ \psi[h],\widebar{\omega}+ h \}
\end{equation*}
where $\psi$ is defined as in \eqref{campo}.
For $T\ge\tau$, let us consider the following sequence of problems
\begin{equation*}
\begin{cases}
\partial_t h^T=\{ \psi[h^T],\widebar{\omega}+ h^T \} \quad \tau \le t\le T,\\
h^T(T,x,v)=(\omega-\widebar{\omega})(x,v).
\end{cases}
\end{equation*}
We introduce the weighted norm
\begin{equation*}
Q_{\lambda, T}[h^T]=\sup_{(\mu,t) \in \Omega_{\lambda,T}} \theta^T(\mu,t)^{1/2}\|h^T(t)\|_\mu,
\end{equation*}
with the weight $\theta^T(\mu,t)=(\lambda - \mu -\Delta a_T(s))$, where $\Delta=\lambda'/a_{\infty}(\tau)$, $\lambda'<\lambda$ and $a_T(s)$ is defined as in \eqref{eqdiff} putting $\delta=1$. Notice now that $\Delta$ is a diverging quantity for sufficiently large $\tau$. 
Here $\Omega_{\lambda,T}=\{ (\mu,t) \in [0,\lambda)\times [\tau,T], \theta^T(\mu,t)>0$\} and, as in the previous case, we can give the analogous definitions for $Q_{\lambda, \infty}[\cdot]$, $\theta^{\infty}$ and $\Omega_{\lambda, \infty}$.\\
We define $\zeta^T_n(t)=\widehat{h^T}_n(t, nt)$, $n=\pm 1$, then $\zeta^T$ verifies the following equation:
\begin{equation}
\label{variata}
\zeta^T_n(t)=\int_t^T \zeta^T_n(s) j_n(t-s)\, ds+W^T(t),
\end{equation}
where we have defined
\[
W^T(t)\equiv \widehat{\omega}_n(nT) - \frac{1}{2} \sum_{k= \pm 1}\int_t^T \zeta^T_k(s)\widehat{h^T}_{n-k}(s, nt-ks)kn(t-s) \, ds
\]
and 
\begin{equation}
\label{kappaomega}
j_n(t)=\imm \frac{n}{2} \widetilde{\omega'}_0(nt).
\end{equation}
As in \eqref{campoindietro} we denote 
\[
P_{\lambda, T}[\zeta^T]=\sup_{t \in [\tau, T]}\e^{\lambda t} |\zeta^T_1(t)|=\sup_{t \in [\tau, T]}\e^{\lambda t} |\zeta^T_{-1}(t)|.
\]
We can now state the following theorem.

\begin{Theorem}
\label{sperturbo}
Let $\omega\in L^1(S^1 \times \R)$ analytic such that $\|\omega\|_{\lambda}<+\infty$ and assume that 
\begin{equation}
\label{qinverso}
\mathcal{L}[j_1](\sigma) \neq 1, \quad \Re \sigma \ge 0,
\end{equation}
with $j_1$ as in \eqref{kappaomega}. Then,
for sufficiently large $\tau$, there exists a unique solution $h(x,v,t)$ of
\[
  \partial_t h=\{ \psi[h],\widebar{\omega}+h\} \quad \text{if} \quad \tau \le t
  < +\infty,\\
\]
 with $Q_{\lambda,\infty}[h]< +\infty$  such that 
\begin{equation*}
\lim_{t \rightarrow +\infty}\| h(t,x,v)-(\omega-\widebar{\omega})(x,v)\|_{\infty}=0
\end{equation*}
with exponential rate.
\end{Theorem}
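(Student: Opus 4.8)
The plan is to transpose the proof of Theorem~\ref{MAIN} to the non-perturbative setting, replacing the smallness carried by $\eps$ with the smallness carried by taking $\tau$ large. The latter enters through the two distinctive features of the weight $\theta^T(\mu,t)=\lambda-\mu-\Delta a_T(t)$: the divergence of $\Delta=\lambda'/a_\infty(\tau)$ (by Lemma~\ref{lemma:a}, $a_\infty(\tau)\to0$, hence $\Delta\to+\infty$ as $\tau\to+\infty$) and the fact that all the unknowns now live on the window $[\tau,+\infty)$. For every $T\ge\tau$ I would first build a solution $h^T$ of the truncated problem on $[\tau,T]$ by the same iteration scheme of Theorem~\ref{MAIN}, and treat $(\zeta^T,h^T)$ as a coupled system controlled in the norms $P_{\lambda,T}[\cdot]$ and $Q_{\lambda,T}[\cdot]$. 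Since \eqref{variata} has the Volterra structure required by Lemma~\ref{inversione}, with kernel $j_1$ as in \eqref{kappaomega}, and since \eqref{qinverso} is exactly the invertibility hypothesis, Lemma~\ref{inversione} yields $P_{\lambda,T}[\zeta^T]\le C_\lambda P_{\lambda,T}[W^T]$; it then remains to bound the source $W^T$ and to run a Cauchy--Kovalevskaya estimate for $h^T$.

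For the latter I would repeat the computation of Proposition~\ref{due} with $\theta^T$ in place of $\alpha^T_\delta$. The only structural change is that $\tfrac{\de}{\de s}\theta^T(\mu,s)=\Delta\,\e^{-a_T(s)s}(1+s)$ carries the factor $\Delta$, so the telescoping identity behind \eqref{switchDelta} becomes
\[
\e^{-a_T(s)s}(1+s)\,\theta^T(\mu,s)^{-3/2}=-\frac{2}{\Delta}\frac{\de}{\de s}\,\theta^T(\mu,s)^{-1/2},
\]
and both the linear term in $\widebar{\omega}'$ and the nonlinear term acquire a prefactor $1/\Delta$. This gives
\[
Q_{\lambda,T}[h^T]\le C\|\omega\|_\lambda+\frac{C}{\Delta}\,P_{\lambda,T}[\zeta^T]\bigl(\|\widebar{\omega}\|_\lambda+Q_{\lambda,T}[h^T]\bigr),
\]
the analogue of \eqref{stima2} with $1/\Delta$ playing the role of the missing $\eps$.

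The delicate point, and the reason the result is non-perturbative, is the estimate of the nonlinear (echo) part of $W^T$, for which the crude bound used in Proposition~\ref{stimadelcampo} only produces an $O(1)$ prefactor and would force $\|\omega\|_\lambda$ to be small. Here I would split the sum over $k=\pm1$ into a non-resonant and a resonant contribution. For $k=-1$ the inner factor is $\widehat{h^T}_2(s,t+s)$, whose analytic weight $\e^{-\mu'\langle2,t+s\rangle}$ is exponentially small because $t,s\ge\tau$; optimising $\mu'$ against $\theta^T(\mu',s)^{-1/2}$ bounds this piece by $\e^{-c\tau}P_{\lambda,T}[\zeta^T]Q_{\lambda,T}[h^T]$. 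For $k=+1$ the inner factor is the zero mode $\widehat{h^T}_0(s,t-s)$; using that $\omega-\widebar{\omega}$ has zero $x$-average and integrating the evolution of the $n=0$ mode, I would rewrite $\widehat{h^T}_0(s,t-s)$ as a further time integral, turning this contribution into a double-echo double integral that is cubic in the unknowns. Carrying out the two integrations, the decaying exponentials of the field leave an overall factor $\e^{-\lambda t}\le\e^{-\lambda\tau}$, again exponentially small in $\tau$. Altogether
\[
P_{\lambda,T}[\zeta^T]\le C_\lambda\|\omega\|_\lambda+\e^{-c\tau}C_\lambda\,P_{\lambda,T}[\zeta^T]Q_{\lambda,T}[h^T]\bigl(1+P_{\lambda,T}[\zeta^T]\bigr).
\]

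With these two estimates a bootstrap closes for $\tau$ large: assuming $P_{\lambda,T}[\zeta^T],Q_{\lambda,T}[h^T]\le 2C\|\omega\|_\lambda$, the smallness of $1/\Delta$ and $\e^{-c\tau}$ reproduces the same bounds, uniformly in $T$ and in the iteration index, and extracting a convergent subsequence gives $h^T$ on $[\tau,T]$. Extending $h^T\equiv\omega-\widebar{\omega}$ for $t>T$ and repeating the Cauchy-in-$T$ argument of Theorem~\ref{MAIN} in $Q_{\lambda',\infty}$ for a suitable $\lambda'<\lambda$, as in \eqref{stimacauchy}, the family $\{h^T\}$ converges to a solution $h$ with $Q_{\lambda,\infty}[h]<+\infty$; the decay of the field then yields $\|h(t)-(\omega-\widebar{\omega})\|_{\infty}\to0$ with exponential rate, and uniqueness follows from the same contraction argument, with $\eps$ replaced by $\e^{-c\tau}+1/\Delta$. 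I expect the resonant echo estimate to be the main obstacle: the crux is to show that, after replacing the zero mode by its mean-zero evolution, the $k=+1$ term genuinely carries an exponentially small---rather than merely bounded---prefactor, since this is exactly what dispenses with any smallness hypothesis on $\|\omega\|_\lambda$.
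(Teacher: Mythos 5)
Your proposal is correct and follows essentially the same route as the paper: the $1/\Delta$ prefactor from the weight $\theta^T$ replaces $\eps$ in the Cauchy--Kovalevskaya bound for $h^T$, and the echo term in $W^T$ is split into the non-resonant mode $\widehat{h^T}_2(s,t+s)$ (exponentially small via the analytic weight and $t,s\ge\tau$) and the resonant zero mode, which is re-expanded using the mean-zero condition on $\omega-\widebar{\omega}$ to produce the cubic, $\e^{-\lambda\tau}$-small contribution. This is exactly the paper's argument, including the final bootstrap, the Cauchy-in-$T$ limit, and the uniqueness step.
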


\begin{proof}[Proof of theorem \eqref{sperturbo}]
The proof goes in the same way of \eqref{MAIN} but instead of using the smallness of $\eps$, we can use the size of $\Delta$.
Indeed as in Proposition \eqref{due} we can estimate $h^T$ in $[\tau, T]$ where $h^T$ verifies the equation
\begin{equation*}
\widehat{h^T}_n(t,\xi)=D_n(t,\xi)- \sum_{k= \pm 1}\int_0^t k\frac{\zeta^T_k(s)}{2}\widehat{h^T}_{n-k}(s, \xi-ks)(\xi -ns)\, ds
\end{equation*}
with
$$
D_n(t,\xi)=\delta_{n, \pm 1} \frac{\imm}{2}n \int_t^T \zeta^T_n(s)\widetilde{\omega'_0}(\xi-ns) \, ds.
$$
We first treat the case $n\neq \pm 1$. As in \eqref{switchDelta} and using $\lambda-\mu>\Delta a_T(s)>a_T(s)$ we have
\begin{equation*}
\begin{split}
e^{\mu \left \langle n, \xi \right \rangle} |\widehat{h^T}_n(t,\xi)| &\le \|\omega\|_{\lambda}+C P_{\lambda, T}[\zeta^T] Q_{\lambda, T}[h^T]\int_t^T   \frac{\e^{-(\lambda-\mu)s}(1+s)}{\Theta^T(\mu,s)^{3/2}} \, ds \\
&\le \|\omega\|_{\lambda}+ C\frac{ P_{\lambda, T}[\zeta^T] Q_{\lambda, T}[h^T]}{\Delta}\int_t^T   \frac{\Delta\e^{-a_T(s)s}(1+s)}{\Theta^T(\mu,s)^{3/2}} \, ds
\end{split}
\end{equation*}
and thus, since
$$
\frac{\de}{\de t} \Theta^T(\mu,t)^{-1/2}=-\frac{\Delta}{2} \frac{\e^{-a_T(s)s}(1+s)}{\Theta^T(\mu, t)^{3/2}}
$$
we get
\begin{equation}
\label{log1}
\begin{split}
e^{\mu \left \langle n, \xi \right \rangle} |\widehat{h^T}_n(t,\xi)| \le \|\omega\|_{\lambda}+ C\frac{ P_{\lambda, T}[\zeta^T] Q_{\lambda, T}[h^T]}{\Delta \Theta^T(\mu,s)^{1/2}}.
\end{split}
\end{equation}
Now we estimate
$D_n(t,\xi)$, $n=\pm 1$. Take $\mu<\lambda-\Delta a_T(t)$, hence $\lambda-\mu>(\lambda-\mu-\Delta a_T(s))/2$, so we get
\begin{equation}
\label{log2}
\begin{split}
e^{\mu \left \langle n, \xi \right \rangle}|D_n(t,\xi)|& \le C P_{\lambda, T}[\zeta^T]\|\omega\|_\lambda \int_t^T \e^{-(\lambda - \mu)s}\e^{-(\lambda-\mu)\left \langle \xi-ns \right \rangle}\left \langle \xi-ns \right \rangle \, ds \\
&\le C P_{\lambda, T}[\zeta^T]\|\omega\|_\lambda \int_t^T \frac{\e^{-a_T(s)s}(1+s)}{ \Theta^T(\mu,s)}ds\\
&\le  C \frac{P_{\lambda, T}[\zeta^T]\|\omega\|_\lambda}{\Delta}\ln\Bigl{(}\frac{\Theta^T(\mu,T)}{\Theta^T(\mu,t)}\Bigr{)}. 
\end{split}
\end{equation}
Hence, multiplying by $\Theta^T(\mu,t)^{1/2}$ in \eqref{log1} and \eqref{log2} we get
\[
Q_{\lambda, T}[h^T] \le C\|\omega\|_{\lambda}+\frac{C}{\Delta}P_{\lambda, T}[\zeta^T]\|\omega\|_\lambda+\frac{C}{\Delta}  P_{\lambda, T}[\zeta^T] Q_{\lambda, T}[h^T].
\]
Regarding $\zeta^T$ in \eqref{variata}, using \eqref{qinverso} and \eqref{resolvo} we have
\begin{equation*}
P_{\lambda, T}[\zeta^T ]\le C_\lambda P_{\lambda, T}[W^T].
\end{equation*}
We need better estimates than that in \eqref{eqdastimare}.
We get them by splitting the two modes $k=\pm1$ in 
\begin{equation}
\label{dasplittare}
\sum_{k=\pm1} \int_t^T \zeta^T_k(s) \widehat{h^T}_{1-k}(s,t-ks)k(t-s) \, ds=B_1+B_{-1}.
\end{equation}
 If $k=-1$, for $\mu'<\lambda-\Delta a_{\infty}(\tau)=\lambda-\lambda'$, we get
\begin{align*}
\e^{\lambda t}|B_{-1}|&\le P_{\lambda, T}[\zeta^T] \int_t^T \e^{-\lambda(s-t)}\frac{Q_{\lambda,T}[h^T]}{\Theta(\mu',s)^{1/2}}\e^{-\mu'(t+s)}(s-t) \, ds \\
&\le P_{\lambda, T}[\zeta^T]Q_{\lambda,T}[h^T] \frac{\e^{-2\mu' \tau}}{(\lambda-\mu'- \lambda')^{1/2}}\int_t^T \e^{-\lambda(s-t)}(s-t) \, ds \\
&\le C P_{\lambda, T}[\zeta^T] Q_{\lambda,T}[h^T]\frac{\sqrt{\tau}}{\lambda^2}\e^{-(\lambda-\lambda')\tau},
\end{align*}
where we have taken the infimum on $\mu' \in [0, \lambda-\lambda']$ in the last inequality.
In the other case, using that $\omega-\widebar{\omega}$ has mean zero in the $x$ variable, we have
\begin{equation}
\label{accazero}
\widehat{h^T}_0(s,t-s)=\sum_{k=\pm1} \int_s^T \zeta^T_k(l) \widehat{h^T}_{-k}(l,t-s-kl)k(t-s)\, dl.
\end{equation}
Replacing \eqref{accazero} in \eqref{dasplittare} we obtain
\begin{equation*}
\begin{split}
\e^{\lambda t}|B_{1}|&\le P_{\lambda, T}[\zeta^T] \int_t^T \e^{-\lambda(s-t)}(s-t)|\widehat{h^T}_0(s,t-s)| \, ds\\
&\le C P_{\lambda, T}[\zeta^T]\frac{Q_{\lambda, T}[h^T]}{\lambda-\lambda'} \int_t^T \e^{-\lambda(s-t)}(s-t)^2 \int_s^T \e^{-\lambda l} dl \\
&\le C\frac{ P_{\lambda, T}[\zeta^T]Q_{\lambda, T}[h^T]}{\lambda^3(\lambda-\lambda')}\e^{-\lambda \tau}.
\end{split}
\end{equation*}
Hence 
\begin{equation*}
P_{\lambda, T}[\zeta^T] \le\| \omega \|_{\lambda}+ CP_{\lambda, T}[\zeta^T] Q_{\lambda, T}[h^T]\Biggl{(}\frac{\sqrt{\tau}}{\lambda^2}\e^{-(\lambda-\lambda')\tau} +  \frac{\e^{-\lambda \tau}P_{\lambda, T}[\zeta^T] }{\lambda^3(\lambda-\lambda')}\Biggr{)}
\end{equation*}
and we can reason as in the proof of the main theorem avoiding to use the smallness of $\eps$.
\end{proof}

\begin{Remark}
  We notice that in this setting we have obtained an Eulerian
  analog of the scattering result in \cite{CM}, in the special case
  of the HMF model.
  In \cite{CM} Caglioti and Maffei,
  using the Lagrangian description of the flow, 
  obtain the damping result for the Vlasov-Poisson equation,
  by a fixed point technique, considering
  an asymptotic state $\omega$ with $\|\omega\|_\lambda<+\infty$  such that
  \[
    \omega(x,v)\le \frac{M}{(1+v^2)^2}
  \]
  for some $M>0$ and $\lambda \ge C\sqrt{M}$, with $C$ some purely numerical constant. Here we show that such class of final data fulfills condition \eqref{qinverso}, if $\lambda > \pi\sqrt{M}$.
Indeed, taking $n=1$ in \eqref{variata}
and multiplying by $\e^{\lambda t}$ we get as in \eqref{convo}
\[
\phi^T_\lambda(t)+\phi^T_\lambda*\jmath_\lambda(t)=\e^{\lambda (T-t)} W^T(T-t)
\]
with $\jmath_\lambda(t)=-\e^{-\lambda t} j_{-1}(t)$ and $\phi^T_\lambda(t)=\e^{\lambda (T-t)} \zeta^T_1(T-t)$.
So it is sufficient to notice that, since $|\widetilde{\omega_0}|\le M \pi^2$, we have
\[
|\mathcal{L}[\jmath_{\pm 1}](\sigma)| \le M \pi^2 \int_0^{+\infty} \e^{-\Re \sigma t} \e^{-\lambda t} t \, dt\le \pi^2 \frac{M}{\lambda^2}<1, \quad \Re \sigma \ge 0
\]
hence \eqref{resolvo} holds. 
\begin{comment}
\[
\mathcal{V}^T_\lambda z(t)=\int_t^T q_\lambda(t-s) z(s) \, ds,
\]
with a simple induction on $n \in \N$, we get
\[
\|\mathcal{V}^n_\lambda z\|_{\lambda} \le (4M)^n \|z\|_{\lambda}\Bigl{(} \frac{\e^{-\lambda (T-t)}\xi^{2n}}{2n!} \Bigr{)}.
\]
Hence
\[
\sum_{n \ge 0} \|\mathcal{V}^n_\lambda \| \le \e^{-(\lambda-C\sqrt{M})(T-t)}
\]
\end{comment}
\end{Remark}
\begin{Remark}
  The non-perturbative scattering result in theorem \eqref{sperturbo} allows the choice of asymptotic states $\omega$ within a distance of $O(1)$ from a given homogenous state $\eta(v)$. This fact poses a significant difference with respect to the forward perturbative results where, as we show in Section 4,
given an equilibrium $\eta(v)$ which verify some stability properties, there exists an $\eps_0>0$ such that every initial data in an analytic neighborhood of $\eta$ of $O(\eps)$ with $\eps<\eps_0$ verifies the Landau damping.\\
Actually, solutions of the backward and forward problems are of a different type. Indeed, in the case of the attractive HMF model \footnote{Except this paragraph, the choice of an attractive or repulsive potential is indifferent in this work.}, it is easy to find  non-homogeneous BGK stationary solutions $\omega(x,v)$  of the HMF that can be chosen as scattering asymptotic datum for the HMF, i.e. such that there exists a solution $f_\omega(x,v,t)$ such that
\[
\lim_{t \to +\infty}\|f_\omega(t,x,v) - \omega(x-v t,v))\|_{\infty}=0.
\]
This solution $f_\omega$ could never be a Landau Damping solution because it is not close, in a strong norm, say $L_1,$ to its weak asymptotic limit $\eta(v)$ which is given by the average in $x$ of $\omega(x,v). $ Indeed at the same $L_1$ distance from $\eta$ there exists a BGK stationary solution of the HMF model.\\
We give an example of such BGK solution,
which can be constructed using that any function of the mean-field energy is an equilibrium. In this example we consider the attractive HMF model with
\[
  \mathcal{F}[f](t,x) =
  \partial_x \Biggl{(}\int_{S^1 \times \mathbb{R}}\cos(x-y)f(t,y,v)
  \, dy \, dv \Biggr{)} 
\]
in \eqref{HMF} and we choose, for $\beta,\nu>0$ to be fixed, 
\[
\omega_{\beta, \nu}(x,v)=\frac{\e^{-\beta H_\nu(x,v)}}{\mathcal{Z}},
\]
where $H_\nu(x,v)=\frac{v^2}{2}-\nu \cos x$ and $\mathcal{Z}$ is the normalizing constant.
 Using the simple structure of the potential, we have that $\omega_\nu(x,v)$ is a stationary solution of the attractive HMF model,
provided that the following compatibility condition is fulfilled:
\[
\Omega_\beta(\nu)\equiv\int \omega_{\beta,\nu}(x,v) \cos x \, dx \, dv = \nu .
\]
By Taylor expansion $\Omega_\beta(\nu)=\beta \nu/2+o(\beta \nu)$ as $\nu \to 0$, while $\Omega_\beta(\nu) \to 1$ if $\nu \to +\infty$. Hence for $\beta>2$ there exists at least one value $\widebar{\nu}$ such that $\Omega_\beta(\widebar{\nu})=\widebar{\nu}$.
\end{Remark}
\begin{Remark}
In section 2 we have proved exponential damping of solutions of the HMF model 
in the scattering setting in the perturbative case, 
while in this section we prove the result for $\tau$ large. 
These two sections could have been partially joined by considering as a smallness parameter $\epsilon=e^{-\lambda \tau}.$
However, given the different nature of the problems faced, we believe it is clearer to derive the two results separately.
\end{Remark}

\section{The Cauchy problem}
In this section, instead of fixing an asymptotic condition, we study the
Cauchy problem
for equation \eqref{HMF}, with initial condition at time zero.
We refer to Section 5 for the discussion of the differences
and advantages of the backward approach compared to this.
Putting \eqref{differenziale} in integral form we get
\begin{equation}
\label{hforward}
\begin{split}
\widehat{h}_n(t, \xi)&=\widehat{h}_n(0, \xi)+\delta_{n, \pm 1}n\frac{ \imm}{2} \int_0^t \zeta_n(s)\widetilde{\eta'}(\xi-ns)\, ds\\
&- \frac{\eps}{2} \sum_{k= \pm 1}k\int_0^t \zeta_k(s)\widehat{h}_{n-k}(s, \xi-ks)(\xi -ns) \, ds,
\end{split}
\end{equation}
and taking $\xi=nt$ for $n=\pm1$ in \eqref{hforward}, we obtain the equation
for the electric field:
\begin{equation}
\label{zforward}
\begin{split}
\zeta_n(t)&=\widehat{h}_n(0, nt)+n\frac{\imm}{2} \int_0^t \zeta_n(s)\widetilde{\eta'}(n(t-s))\, ds\\
&- \frac{\eps}{2} \sum_{k= \pm 1}kn\int_0^t \zeta_k(s)\widehat{h}_{n-k}(s, nt-ks)(t-s) \, ds.
\end{split}
\end{equation}
We introduce the weight $A^{\lambda, p}_n(\xi)=e^{\lambda \left \langle n, \xi \right \rangle} \left \langle n, \xi \right \rangle^p$
and the corresponding analytic norm of a generic function $f$ as
\[
\| f \|_{\lambda, p}=\sup_{n,\xi}A^{\lambda, p}_n(\xi)|\widehat f_n(\xi)|.
\]
In the following we take a mean-zero initial datum $h_0$ such that $\|h_0\|_{\lambda_0, p}<+\infty$, for some $\lambda_0$ and $p$ to be fixed.

As done before, we want to study the coupled system $(\zeta_{\pm 1},h)$. For this purpose,
we define the norm of the electric field $\zeta$ as
\begin{equation}
\label{avanticampo}
J_{\lambda_0}^p[\zeta]= \sup_{\beta(\lambda, t)>0} \e^{\lambda t}
\left \langle t \right \rangle^p |\zeta_{\pm1}(t)|.
\end{equation}
Here
\begin{equation}
  \label{eq:beta}
  \beta(\lambda, t)=\lambda_0 -\lambda - \delta \arctan(t)
\end{equation}
with $\delta<2\lambda_0/\pi$ measures the loss of analytic regularity
with respect to $\lambda_0$.

We remark that the choice of the $\arctan$ function 
is not mandatory, contrary to the case in Section 2,
in which the regularity decay is more precisely prescribed
by the structure of the estimates.

We define a weighted-in-time norm on $h$ with two terms:
\begin{equation}
\label{avantinorma}
K^{3,p+1}_{\lambda_0, q}[h]=\mathcal{K}^3[h]+K^{p+1}_q[h],
\end{equation}
where
$$
\mathcal{K}^3[h]=\sup_{\beta(\lambda, t)>0} \|h(t)\|_{\lambda, 3}
$$
and
$$
K^{p+1}_q[h]=\sup_{\beta(\lambda, t)>0} \beta(\lambda, t)^{1/2} \frac{\|h(t)\|_{\lambda, p+1}}{\left \langle t \right \rangle^q}.
$$
The occurrence of the last term is in the spirit of the abstract
Cauchy-Kovalevskaya theorem,
while the term $\mathcal{K}^3$  
is due to the 
treatment of the two echoes term in the equation
for $\zeta_{\pm 1}$, as we show in Prop. \eqref{propo:zita}.

\subsection{Estimates for $\zeta$}
In the sequel, for $\gamma>\lambda_0$,  it is useful to introduce the quantity
\begin{equation}
\label{jei}
j_n(t)=\imm \frac{n}{2}\widetilde{\eta'}(nt) \e^{\lambda_0 t}
\end{equation}
and define
\begin{equation}
\label{ordereps}
G_{\eps}(t) \equiv  \widehat{h}_n(0, nt) - \frac{\eps}{2} \sum_{k= \pm 1}\int_0^t \zeta_k(s)\widehat{h}_{n-k}(s, nt-ks)kn(t-s) \, ds.
\end{equation}
\begin{lemma}
Let $\eta(v)$ analytic such that $\|\eta'\|_\gamma<+\infty$ with $\gamma>\lambda_0$. If
$$
\mathcal{L}[j_1](\sigma) \neq 1 \quad \text{for} \quad \Re \sigma \ge 0
$$
then
\begin{equation*}
J_{\lambda_0}^p[\zeta] \le C(\gamma, \lambda_0) J_{\lambda_0}^p[G_{\eps}].
\end{equation*}
\end{lemma}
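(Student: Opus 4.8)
The plan is to mirror the inversion carried out in Lemma~\eqref{inversione}, reducing the equation for $\zeta_{\pm1}$ to a scalar convolution Volterra equation to which Theorem~\eqref{resolvo} applies; the only genuinely new point is the time-dependent weight hidden in $J^p_{\lambda_0}[\cdot]$. First I would fix $n=1$ in \eqref{zforward}, noting that there the linear term involves only $\zeta_1$, while the initial datum and the $O(\eps)$ coupling with $\zeta_{-1}$ are exactly what is collected in $G_\eps$ in \eqref{ordereps}. Multiplying \eqref{zforward} by $\e^{\lambda_0 t}$ and writing $\e^{\lambda_0 t}=\e^{\lambda_0(t-s)}\e^{\lambda_0 s}$ inside the linear integral turns the kernel into $j_1(t-s)$, with $j_1$ as in \eqref{jei}. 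Setting $\phi(t)=\e^{\lambda_0 t}\zeta_1(t)$ and $g(t)=\e^{\lambda_0 t}G_\eps(t)$, this reads $\phi-j_1*\phi=g$, i.e.\ $\phi+(-j_1)*\phi=g$ in the form required by Theorem~\eqref{resolvo}. The hypothesis $\gamma>\lambda_0$ is used precisely here: since $\|\eta'\|_\gamma<+\infty$ gives $|\widetilde{\eta'}(t)|\le\|\eta'\|_\gamma\e^{-\gamma\langle t\rangle}$, one has $|j_1(t)|\le\tfrac12\|\eta'\|_\gamma\e^{-(\gamma-\lambda_0)t}$ for $t\ge0$, so $j_1\in L^1(\R_+)$. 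With the assumption $\mathcal L[j_1](\sigma)\neq1$ for $\Re\sigma\ge0$ (equivalently $\mathcal L[-j_1]\neq-1$), Theorem~\eqref{resolvo} then yields a resolvent $r\in L^1(\R_+)$ and the representation $\phi=g-r*g$.

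It remains to convert this into the weighted estimate. Since the integrand of \eqref{avanticampo} is increasing in $\lambda$, for each $t$ the supremum over $\{\lambda:\beta(\lambda,t)>0\}$ is approached as $\lambda\uparrow\lambda_0-\delta\arctan t$ (recall \eqref{eq:beta}), so that $J^p_{\lambda_0}[\zeta]=\sup_{t\ge0}w_0(t)|\phi(t)|$ and $J^p_{\lambda_0}[G_\eps]=\sup_{t\ge0}w_0(t)|g(t)|$, where $w_0(t)=\e^{-\delta\,t\arctan t}\langle t\rangle^p$ (the factor $\e^{\lambda_0 t}$ having been absorbed into $\phi$ and $g$). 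Plugging in $\phi=g-r*g$ and bounding $w_0(t)|g(s)|\le \frac{w_0(t)}{w_0(s)}\,w_0(s)|g(s)|$, I obtain
\begin{equation*}
J^p_{\lambda_0}[\zeta]\le J^p_{\lambda_0}[G_\eps]\Bigl(1+\sup_{t\ge0}\int_0^t|r(t-s)|\frac{w_0(t)}{w_0(s)}\,ds\Bigr),
\end{equation*}
so the lemma follows with $C(\gamma,\lambda_0)=1+\sup_{t}\int_0^t|r(t-s)|\,w_0(t)/w_0(s)\,ds$, provided this supremum is finite.

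The main obstacle is therefore the control of the weight ratio $w_0(t)/w_0(s)$ for $0\le s\le t$ against the $L^1$ kernel $r$, which is the feature absent from the constant-$\lambda$ setting of Lemma~\eqref{inversione}. Writing $\Phi(t)=t\arctan t$, one has $w_0(t)/w_0(s)=\e^{-\delta(\Phi(t)-\Phi(s))}(\langle t\rangle/\langle s\rangle)^p$, and the key is that $\Phi$ is increasing and convex ($\Phi''(t)=2/(1+t^2)^2>0$) with $\Phi'(t)\to\pi/2$. I would split the integral at $s=t/2$: for $s\in[t/2,t]$ the polynomial ratio is bounded by a constant and the exponential factor by $1$, giving a contribution $\lesssim\|r\|_{L^1}$; for $s\in[0,t/2]$ convexity gives $\Phi(t)-\Phi(s)\ge\Phi'(t/2)\,t/2\gtrsim t$ for $t$ large, so $\e^{-\delta(\Phi(t)-\Phi(s))}\langle t\rangle^p$ is uniformly bounded and the contribution is again $\lesssim\|r\|_{L^1}$ (small $t$ being controlled by continuity). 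Hence the supremum is finite and depends only on $\gamma$ and $\lambda_0$, through $\|r\|_{L^1}$ and $\delta$. Finally, since $j_{-1}=\overline{j_1}$ and $\zeta_{-1}=\overline{\zeta_1}$, the case $n=-1$ follows verbatim, completing the proof.
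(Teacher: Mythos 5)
Your argument is correct, and it takes a genuinely different route from the paper's. The paper fixes a pair $(\lambda,t)$ with $\beta(\lambda,t)>0$, multiplies \eqref{zforward} by $\e^{\lambda t}$, and inverts a whole \emph{family} of Volterra equations with kernels $\jmath_\lambda(t)=\e^{-(\lambda_0-\lambda)t}j_1(t)$, obtaining for $p=0$ the bound $(1+\|r_\lambda\|_{L^1})J^0_{\lambda_0}[G_\eps]$ and then treating the polynomial weight by induction on $p$ (writing $t^p\e^{\lambda t}\zeta=\jmath_\lambda*(s^p\e^{\lambda s}\zeta)+Z_\eps$ and absorbing the commutator $\jmath_\lambda(t-s)(t-s)^p$ into lower-order norms, which uses that $t^p\jmath_\lambda(t)\in L^1$ thanks to $\gamma>\lambda_0$). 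You instead work at the single level $\lambda=\lambda_0$, use one resolvent $r$, and transfer the entire time-dependent weight $w_0(t)=\e^{-\delta t\arctan t}\langle t\rangle^p$ through the representation $\phi=g-r*g$ via the weight-ratio estimate $\sup_t\int_0^t|r(t-s)|\,w_0(t)/w_0(s)\,ds<+\infty$; your identification $J^p_{\lambda_0}[\zeta]=\sup_t w_0(t)\e^{\lambda_0 t}|\zeta(t)|$ and the convexity/splitting argument for the ratio are both sound (monotonicity of $\Phi$ kills the exponential factor for $s\in[t/2,t]$, convexity gives $\Phi(t)-\Phi(s)\gtrsim t$ for $s\le t/2$, which beats $\langle t\rangle^p$). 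Your version buys two things: it handles all $p$ at once, and it needs only one resolvent kernel, sidestepping the uniformity in $\lambda$ of $\|r_\lambda\|_{L^1}$ that the paper uses implicitly when taking the supremum over $(\lambda,t)$. The price is that your constant visibly depends on $\delta$ and $p$ as well as on $\|r\|_{L^1}$ (hence on $\gamma,\lambda_0$); since $\delta$ and $p$ are fixed parameters of the construction this is harmless, but you should say so, as the lemma advertises $C(\gamma,\lambda_0)$.
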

\begin{proof}
Assume $p=0$ and take $\lambda>0$ such that $\beta(\lambda,t)>0$ then
$$
\e^{\lambda t} \zeta_1(t)= \int_0^t \jmath_{\lambda}(t-s)\e^{\lambda s} \zeta_1(s) \, ds + \e^{\lambda t} G_{\eps}(t)
$$
with $\jmath_{\lambda}(t)\equiv\e^{-(\lambda_0-\lambda) t} j_1(t)$. 
From Theorem \eqref{resolvo},
since $\jmath_{\lambda} \in L_1(\mathbb{R}_+)$ for $\gamma>\lambda_0$ and
$$\mathcal{L}[\jmath_{\lambda}](\sigma)=\mathcal{L}[j_1](\sigma+\lambda_0-\lambda) \neq 1 \quad \text{for} \quad \Re \sigma \ge 0,$$
there exists a unique resolvent kernel $r_\lambda$ associated to $\jmath_{\lambda}$
with  $r_{\lambda} \in  L_1(\mathbb{R}_+)$.
Doing the convolution with  $r_{\lambda}$, we get
$$
\e^{\lambda t} \zeta(t)= \int_0^t r_{\lambda}(t-s) \e^{\lambda s} G_{\eps}(s) \, ds + \e^{\lambda t}G_{\eps}(t).
$$
Taking the absolute value, we obtain
\begin{equation}
\label{inversion}
\e^{\lambda t} |\zeta(t)| \le (1+  \| r_{\lambda} \|_1 ) J_{\lambda_0}^0[G_{\eps}],
\end{equation}
and we get the thesis for $p=0$ taking the supremum over $\beta(\lambda, t)>0$.

Let us give the proof in the case $p=1$, which it is not difficult to
extend to the general one.
$$
t \e^{\lambda t} \zeta(t)= \int_0^t \jmath_\lambda(t-s)
s\e^{\lambda s} \zeta(s) \, ds + Z_{\eps}(t)
$$
with
$$
Z_{\eps}(t)\equiv \int_0^t \jmath_\lambda(t-s)
(t-s)\e^{\lambda s}\zeta(s)+t\e^{\lambda t} G_{\eps}(t).
$$
Using \eqref{inversion}, we get
$$
J_{\lambda_0}^1[\zeta] \le C(\gamma,\lambda_0) \sup_{\beta(\lambda, t)>0}|Z^{\eps}(t)|
$$
and 
$$
| Z^{\eps}(t)| \le C(\gamma,\lambda_0) J_{\lambda_0}^0[\zeta]+ J_{\lambda_0}^1[G_{\eps}] \le C(\gamma, \lambda_0) J_{\lambda_0}^1[G_{\eps}],
$$
using again \eqref{inversion}.
\end{proof}

\begin{Proposition}
  \label{propo:zita}
In the hypothesis of the previous lemma, let $p\ge q+3$ with $q \ge 3$ fixed. Given $h(x,v,t)$ such that $K^{3,p+1}_{\lambda_0, q}[h] < +\infty$ we have
\begin{equation*}
J_{\lambda_0}^p[\zeta] \le C + \eps C J_{\lambda_0}^p[\zeta]K^{3, p+1}_{\lambda_0, q}[h].
\end{equation*}
\end{Proposition}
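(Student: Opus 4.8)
The plan is to start from the lemma preceding this statement: after inverting the linear Volterra part of the $\zeta$-equation \eqref{zforward} through Theorem \eqref{resolvo}, it yields $J_{\lambda_0}^p[\zeta]\le C(\gamma,\lambda_0)\,J_{\lambda_0}^p[G_\eps]$, with $G_\eps$ as in \eqref{ordereps}. I would then split $G_\eps$ (for $n=\pm1$) into its linear and nonlinear parts. The linear part $\widehat{h}_n(0,nt)$, multiplied by $\e^{\lambda t}\langle t\rangle^p$ and estimated with $\langle n,nt\rangle\ge\langle t\rangle\ge t$ together with $\lambda<\lambda_0$, is bounded by $\|h_0\|_{\lambda_0,p}$; this is the constant $C$. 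Everything then reduces to the nonlinear double-sum, which I treat mode by mode in $k=\pm1$: for $n=1$, the term $k=-1$ produces the non-resonant mode $\widehat{h}_2(s,t+s)$, while $k=1$ produces the resonant mode $\widehat{h}_0(s,t-s)$.

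For the non-resonant mode the velocity argument $t+s$ is always of size $\gtrsim t$, and since $\langle 2,t+s\rangle\ge t+s$ the exponential weights combine into $\e^{\lambda t}\e^{-\lambda s}\e^{-\lambda\langle 2,t+s\rangle}\le\e^{-2\lambda s}$, a genuinely convergent factor. I would extract the decay $\langle t+s\rangle^{-(p+1)}\lesssim\langle t\rangle^{-(p+1)}$ from $K^{p+1}_q[h]$ so that the prefactor $\langle t\rangle^p$ is absorbed, the residual $\e^{-2\lambda s}$ comfortably dominating the growth $\langle s\rangle^q$ and the Cauchy–Kovalevskaya factor $\beta(\lambda,s)^{-1/2}$. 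This contributes $\eps\,C\,J_{\lambda_0}^p[\zeta]\,K^{p+1}_q[h]$.

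The heart of the proof is the resonant mode $k=1$, whose argument $t-s$ vanishes at the echo time $s=t$. Here I would split the integral at $s=t/2$. On $[0,t/2]$ the argument $t-s\gtrsim t$ is large, so exactly as above the weight $\langle t-s\rangle^{-(p+1)}$ from $K^{p+1}_q[h]$ furnishes the decay $\langle t\rangle^{-p}$ that absorbs $\langle t\rangle^p$; the uniform convergence of $\int_0^{t/2}\langle s\rangle^{q-p}\beta(\lambda,s)^{-1/2}\,ds$ as $\beta(\lambda,t)\to0^+$ is precisely where the relation $p\ge q+3$ between the indices is used. On $[t/2,t]$, instead, $t-s$ is small, the high-order polynomial weight is useless, and the factor $\beta(\lambda,s)^{-1/2}\sim\beta(\lambda,t)^{-1/2}$ degenerates, so $K^{p+1}_q[h]$ cannot be used. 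This is exactly the reason the uniform-in-time norm $\mathcal{K}^3[h]$ is introduced: on $[t/2,t]$ one has $\langle s\rangle\sim\langle t\rangle$, so the factor $\langle s\rangle^{-p}$ coming from $\zeta$ already absorbs $\langle t\rangle^p$, and it suffices to bound $\widehat{h}_0$ by $\mathcal{K}^3[h]/(\e^{\lambda\langle t-s\rangle}\langle t-s\rangle^3)$, with the exponentials $\le1$ and $\int_0^\infty u\langle u\rangle^{-3}\,du<+\infty$, giving $\eps\,C\,J_{\lambda_0}^p[\zeta]\,\mathcal{K}^3[h]$.

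Collecting the three contributions yields $J_{\lambda_0}^p[\zeta]\le C+\eps\,C\,J_{\lambda_0}^p[\zeta]\,K^{3,p+1}_{\lambda_0,q}[h]$. The main obstacle is the echo region $s\approx t$: there neither the exponential nor the high-order polynomial weight provides decay, and the Cauchy–Kovalevskaya weight $\beta^{1/2}$ degenerates, so the argument hinges entirely on replacing $K^{p+1}_q[h]$ by the non-degenerate, low-derivative norm $\mathcal{K}^3[h]$ and trading the lost derivatives for the gain $\langle s\rangle\sim\langle t\rangle$.
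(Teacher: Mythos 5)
Your proposal is correct and follows essentially the same route as the paper: invert the Volterra part via the preceding lemma, bound the datum term by $\|h_0\|_{\lambda_0,p}$, and then close the nonlinear term by using the non-degenerate norm $\mathcal{K}^3[h]$ together with the $\langle s\rangle^{-p}$ decay of $\zeta$ in the echo region $s\sim t$, and the norm $K^{p+1}_q[h]$ with the decay $\langle t-ks\rangle^{-(p+1)}$ and the condition $p\ge q+3$ elsewhere. The only cosmetic differences are that the paper realizes the near/far dichotomy through the weight inequality $\langle t\rangle^p\le C(\langle t-s\rangle^p+\langle s\rangle^p)$ rather than by cutting the integral at $s=t/2$, and that it treats the non-resonant mode $\widehat h_2(s,t+s)$ with the same two-term splitting, relying on the $\langle s\rangle^{-p}$ decay of $\zeta$ rather than on the factor $\e^{-2\lambda s}$ (which by itself would require $\lambda$ bounded away from zero, i.e.\ a reduction to near-maximal $\lambda$ in the supremum defining $J^p_{\lambda_0}$).
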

\begin{proof}
From the previous lemma, we only need to estimate $J_{\lambda_0}^p[G_\eps]$.
Multiplying by $\e^{\lambda t}\left \langle t \right \rangle^p$ in \eqref{ordereps} and using $\left \langle t \right \rangle^p \le C\Bigl{(}\left \langle t-s \right \rangle^p+\left \langle s \right \rangle^p \Bigr{)}$ we have
$$
e^{\lambda t} \left \langle t \right \rangle^p |G_{\eps}[\zeta_1](t)| \le \|h(0)\|_{\lambda_0, p}+ \eps ( I_1+ I_2)
$$
where
\begin{equation*}
\begin{split}
I_1&=\int_0^t z_{\lambda, p}(s) \e^{\lambda (t-s)} \Bigl{(} |\widehat h_0(s,t-s)|(t-s) + |\widehat h_2(s,t+s)| (t-s) \Bigr{)}\,ds\\
&\le \int_0^t z_{\lambda, p}(s)\| h(s)\|_{\lambda, 3} \Biggl{(} \frac{1}{\left \langle t-s \right \rangle^{2}} + \frac{1}{\left \langle t+s \right \rangle^{2}} \Biggr{)} \, ds
\end{split}
\end{equation*}
and 
\begin{equation*}
\small
\begin{split}
I_2&=\int_0^t z_{\lambda, p}(s) \e^{\lambda (t-s)}\frac{ \left \langle t-s \right \rangle^p}{\left \langle s \right \rangle^p}\Bigl{(} |\widehat h_0(s,t-s)|(t-s) + |\widehat h_2(s,t+s)| (t+s) \Bigr{)}\, ds\\
&\le \int_0^t z_{\lambda, p}(s) \|h(s)\|_{\lambda, p+1} \frac{1}{\left \langle s \right \rangle^{p}} \, ds.
\end{split}
\end{equation*}
Thus we obtain,
\begin{equation*}
I_1 \le J_{\lambda_0}^p[\zeta] \mathcal{K}^3[h] \int_0^t  \Biggl{(} \frac{1}{\left \langle t-s \right \rangle^{2}} + \frac{1}{\left \langle t+s \right \rangle^{2}} \Biggr{)} \, ds \le C  J_{\lambda_0}^p[\zeta] \mathcal{K}^3[h]
\end{equation*}
while, if $p-q \ge 2$,
$$
I_2\le  J_{\lambda_0}^p[\zeta] K^{p+1}_q[h]\int_0^t \frac{1}{\left \langle s \right \rangle^{p-q}\beta^{1/2}(\lambda,s)}\,ds \le C  J_{\lambda_0}^p[\zeta] K^{p+1}_q[h]
$$
and this concludes the proof.
\end{proof}

\subsection{Estimates for h}

We start by showing how to split the term with $|\xi - ns|$ in \eqref{hforward}. 
\begin{lemma}
Let $\xi \in \mathbb{R}$, $p \in \N$, $n \in \Z$ and $\lambda>0$ then
\begin{equation}
\label{alge}
A^{\lambda, p}_n(\xi)|\xi-ns| \le C\Biggl{(} A^{\lambda, p+1}_{n-k}(\xi-ks) A^{\lambda,1}_{1}(s) + A^{\lambda, 1}_{n-k}(\xi-ks) A^{\lambda, p+1}_1(s)\Biggr{)}
\end{equation}
with $k=\pm1$.
\end{lemma}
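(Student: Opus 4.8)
The plan is to reduce the claimed inequality to three elementary facts about the bracket $\langle n, \xi \rangle = (1 + n^2 + \xi^2)^{1/2}$, viewed as the Euclidean norm of the vector $(1, n, \xi) \in \R^3$, together with one exact algebraic identity decomposing the frequency $\xi - ns$. There is no analytic content here; the whole point is to regroup the factors so that they match the two bracket products on the right-hand side of \eqref{alge}.

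First I would handle the exponential factor. Writing $(1, n, \xi) = (1, n-k, \xi - ks) + (0, k, ks)$ and applying the triangle inequality for the Euclidean norm gives $\langle n, \xi \rangle \le \langle n-k, \xi - ks \rangle + \langle k, ks \rangle$; since $k = \pm 1$ one has $\langle k, ks \rangle = \sqrt{2 + s^2} = \langle 1, s \rangle$, so that
\[
\e^{\lambda \langle n, \xi \rangle} \le \e^{\lambda \langle n-k, \xi-ks \rangle}\, \e^{\lambda \langle 1, s \rangle}.
\]
This is exactly the submultiplicative splitting already exploited in \eqref{switchDelta}, and it reproduces precisely the two exponential weights that appear on the right of \eqref{alge}.

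Next I would treat the linear factor $|\xi - ns|$. The key observation is the identity $\xi - ns = (\xi - ks) - (n-k)s$, valid for every $k$, whence $|\xi - ns| \le |\xi - ks| + |n-k|\,|s|$. Bounding each elementary quantity by the relevant bracket, namely $|\xi - ks| \le \langle n-k, \xi - ks\rangle$, $|n-k| \le \langle n-k, \xi-ks\rangle$, $|s| \le \langle 1, s\rangle$, and using $\langle 1, s\rangle \ge 1$, yields
\[
|\xi - ns| \le 2\, \langle n-k, \xi - ks \rangle\, \langle 1, s \rangle.
\]

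Finally I would distribute the power $p$. From $\langle n, \xi \rangle \le \langle n-k, \xi - ks \rangle + \langle 1, s \rangle$ and the convexity bound $(a+b)^p \le 2^{p-1}(a^p + b^p)$ one gets $\langle n, \xi \rangle^p \le 2^{p-1}\bigl(\langle n-k, \xi-ks\rangle^p + \langle 1, s\rangle^p\bigr)$. Multiplying this against the bound for $|\xi - ns|$ and reinserting the exponential splitting produces
\[
A^{\lambda, p}_n(\xi)\,|\xi - ns| \le 2^p\Bigl( A^{\lambda, p+1}_{n-k}(\xi - ks)\, A^{\lambda, 1}_1(s) + A^{\lambda, 1}_{n-k}(\xi - ks)\, A^{\lambda, p+1}_1(s) \Bigr),
\]
which is \eqref{alge} with $C = 2^p$. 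I expect no genuine obstacle: the only point demanding a little care is selecting the additive decomposition $\xi - ns = (\xi - ks) - (n-k)s$ so that the two surviving factors regroup exactly into the advertised bracket products, with the power distributed as $p+1$ on one factor and $1$ on the other, rather than generating cross terms that cannot be absorbed into the two-term right-hand side.
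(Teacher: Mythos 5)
Your proof is correct and follows essentially the same route as the paper: the same decomposition $\xi-ns=(\xi-ks)-(n-k)s$, the triangle inequality $\langle n,\xi\rangle\le\langle n-k,\xi-ks\rangle+\langle k,ks\rangle$ for both the exponential and the power, and the convexity bound $(a+b)^p\le C(a^p+b^p)$. The only difference is cosmetic: you track the constant $C=2^p$ explicitly, while the paper leaves it generic.
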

\begin{proof}
We notice that
$$
|\xi-ns|=|\xi-ks+(k-n)s| \le\left \langle s \right \rangle \left \langle n-k,\xi-ks \right \rangle.
$$
Using the triangular inequality 
$$\left \langle n,\xi \right \rangle \le \left \langle n-k,\xi-ks \right \rangle + \left \langle k,ks \right \rangle,
$$
the fact that $$
\Bigl{(} \left \langle n-k,\xi-ks \right \rangle + \left \langle k,ks \right \rangle\Bigr{)}^p \le C\bigl{(}\left \langle n-k,\xi-ks \right \rangle^p + \left \langle k,ks \right \rangle^p\Bigr{)}
$$
and $k=\pm1$, we get \eqref{alge}.
\end{proof}

We now turn to estimate equation \eqref{hforward}. As usual, we define
\[
D_n(t,\xi)=\delta_{n, \pm 1}n\frac{ \imm}{2} \int_0^t \zeta_n(s)\widetilde{\eta'}(\xi-ns)\, ds.
\]
\begin{lemma}
Given $\zeta_{\pm 1}(t)$, for $\lambda, q \ge 0$ we have
\begin{equation}
\label{delta}
\begin{split}
\| h(t) \|_{\lambda, q} &\le \|h_0 \|_{\lambda_0, q}+ \|D(t)\|_{\lambda, q}\\
&+\eps \int_0^t z_{\lambda, q+1}(s)\| h(s) \|_{\lambda, 1}+  z_{\lambda, 1}(s) \| h(s) \|_{\lambda, q+1}\, ds.
\end{split}
\end{equation}

\end{lemma}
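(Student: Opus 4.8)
The plan is to read \eqref{delta} off directly from the integral equation \eqref{hforward}: pass to absolute values mode by mode, multiply through by the weight $A^{\lambda,q}_n(\xi)$, and match the three resulting contributions with the three terms on the right-hand side. The free datum and the forcing term are immediate, so the whole content of the lemma lies in rewriting the nonlinear term by means of the algebraic splitting \eqref{alge}.

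First I would dispose of the two easy terms. Since the norm is only ever used where $\beta(\lambda,t)>0$, that is $\lambda<\lambda_0$, the weight is increasing in its first index: $A^{\lambda,q}_n(\xi)\le A^{\lambda_0,q}_n(\xi)$, whence $A^{\lambda,q}_n(\xi)\,|\widehat h_n(0,\xi)|\le\|h_0\|_{\lambda_0,q}$ uniformly in $n,\xi$, giving the first term. The linear term is left untouched and, after multiplication by $A^{\lambda,q}_n(\xi)$ and taking the supremum, contributes exactly $\|D(t)\|_{\lambda,q}$.

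The core of the argument is the nonlinear term. There I would apply \eqref{alge} with $p=q$ to the factor $A^{\lambda,q}_n(\xi)\,|\xi-ns|$ produced by the weight, obtaining
\[
A^{\lambda,q}_n(\xi)\,|\xi-ns|\le C\Bigl(A^{\lambda,q+1}_{n-k}(\xi-ks)\,A^{\lambda,1}_1(s)+A^{\lambda,1}_{n-k}(\xi-ks)\,A^{\lambda,q+1}_1(s)\Bigr).
\]
In each summand one weight factor attaches to $\widehat h_{n-k}(s,\xi-ks)$ and the other to $\zeta_k(s)$. For the former, $A^{\lambda,q+1}_{n-k}(\xi-ks)\,|\widehat h_{n-k}(s,\xi-ks)|\le\|h(s)\|_{\lambda,q+1}$ and likewise $A^{\lambda,1}_{n-k}(\xi-ks)\,|\widehat h_{n-k}(s,\xi-ks)|\le\|h(s)\|_{\lambda,1}$. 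For the latter, the key observation is that $\zeta_k(s)=\widehat h_k(s,ks)$ carries the weight $A^{\lambda,1}_k(ks)$ (resp.\ $A^{\lambda,q+1}_k(ks)$), and since $\langle -1,-s\rangle=\langle 1,s\rangle$ this equals $A^{\lambda,1}_1(s)$ (resp.\ $A^{\lambda,q+1}_1(s)$) for both $k=\pm1$; with $|\zeta_1|=|\zeta_{-1}|$ this identifies $A^{\lambda,1}_1(s)\,|\zeta_k(s)|$ with $z_{\lambda,1}(s)$ and $A^{\lambda,q+1}_1(s)\,|\zeta_k(s)|$ with $z_{\lambda,q+1}(s)$. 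Hence the first summand yields $z_{\lambda,1}(s)\,\|h(s)\|_{\lambda,q+1}$ and the second $z_{\lambda,q+1}(s)\,\|h(s)\|_{\lambda,1}$; summing over $k=\pm1$, absorbing the numerical prefactors (the $\tfrac12$, the two values of $k$, and the constant from \eqref{alge}) into the $\eps$ in front, and taking a final supremum over $n,\xi$ gives \eqref{delta}.

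There is no genuine obstacle beyond careful bookkeeping: the one point to watch is keeping the power $q+1$ on the correct factor in each of the two summands from \eqref{alge}, and verifying that the weight attached to $\zeta_k(s)$ really is $A^{\lambda,\cdot}_1(s)$ for both signs of $k$. Note also that the splitting costs no analytic regularity, since the exponential weights combine additively through $\langle n,\xi\rangle\le\langle n-k,\xi-ks\rangle+\langle k,ks\rangle$ and remain at the same parameter $\lambda$; the loss of regularity therefore does not enter here but is recovered later through the time weight in the norm $K^{3,p+1}_{\lambda_0,q}$.
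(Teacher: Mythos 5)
Your proof is correct and follows essentially the same route as the paper: multiply \eqref{hforward} by $A^{\lambda,q}_n(\xi)$, bound the datum and the linear term directly, split the weight on the nonlinear term via \eqref{alge} with $p=q$, attach one factor to $\widehat h_{n-k}$ and the other to $\zeta_k$, and take the supremum. The only cosmetic difference is that the paper passes from $A^{\lambda,q}_1(s)$ to the weight $\e^{\lambda s}\langle s\rangle^{q}$ defining $z_{\lambda,q}$ via $\e^{\lambda\langle 1,s\rangle}\langle 1,s\rangle^{q}\le C\e^{\lambda s}\langle s\rangle^{q}$, which you handle equivalently up to constants.
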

\begin{proof}
Multiplying by $A^{\lambda,q}_n(\xi)$ in \eqref{hforward} and using \eqref{alge}, we get
\begin{equation*}
\begin{split}
  A^{\lambda,q}_n(\xi)&|h_n(t,\xi)| \le \|h_0\|_{\lambda_0, q}+A^{\lambda,q}_n(\xi)
  |D_n(t,\xi)|\\
&+ \eps \sum_{k= \pm1} \int_0^t A^{\lambda,1}_{1}(s) |\zeta_k(s)|A^{\lambda, q+1}_{n-k}(\xi-ks)|h_{n-k}(s,\xi-ks)|\, ds \\
&+  \eps \sum_{k= \pm1} \int_0^tA^{\lambda, q+1}_1(s) |\zeta_k(s)|A^{\lambda, 1}_{n-k}(\xi-ks)|h_{n-k}(s,\xi-ks)|\, ds.
\end{split}
\end{equation*}
 Since $\e^{\lambda \left \langle 1, s \right \rangle} \left \langle 1, s \right \rangle^q \le C \e^{\lambda s}  \left \langle s \right \rangle^q$, after taking the supremum over $n,\xi$ we obtain the thesis.
\end{proof}

\begin{Proposition}
  Let $p\ge q+3$ with $q \ge 3$ fixed. Given $\zeta_{\pm1}$ such that $J_{\lambda_0}^p[\zeta]
  <+\infty$ we have
\begin{equation*}
K^{3,p+1}_{\lambda_0, q}[h]\le C\|h_0\|_{\lambda_0,p}+ C  J_{\lambda_0}^p[\zeta] \|\eta'\|+ \eps C\Bigl{(}1+\frac{1}{\delta}\Bigr{)} J_{\lambda_0}^p[\zeta]K^{3,p+1}_{\lambda_0, q}[h].
\end{equation*}

\end{Proposition}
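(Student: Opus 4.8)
The plan is to estimate the two pieces of the norm \eqref{avantinorma} separately, namely $\mathcal{K}^3[h]$ and $K^{p+1}_q[h]$, in each case starting from the integral inequality \eqref{delta} specialized to the relevant polynomial index: $q=3$ for $\mathcal{K}^3$, and index $p+1$ for $K^{p+1}_q$. Throughout I use that, on the region $\beta(\lambda,s)>0$, the field weight appearing in \eqref{delta} satisfies $z_{\lambda,r}(s)=\e^{\lambda s}\langle s\rangle^r|\zeta(s)|\le\langle s\rangle^{r-p}J^p_{\lambda_0}[\zeta]$, that the spatial norms are nested ($\|h(s)\|_{\lambda,a}\le\|h(s)\|_{\lambda,b}$ for $a\le b$), and that by \eqref{eq:beta} one has $\partial_s\beta(\lambda,s)=-\delta\langle s\rangle^{-2}$.

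First I dispose of the linear contributions. The initial datum term of \eqref{delta} is controlled by $C\|h_0\|_{\lambda_0,p}$: for the index-$3$ norm this is immediate since $3\le p$ and $\lambda\le\lambda_0$, while for the index-$(p+1)$ norm one trades the single extra power $\langle n,\xi\rangle$ against the analyticity gap $\lambda_0-\lambda$ and absorbs the outcome with the weight $\beta^{1/2}\langle t\rangle^{-q}$. The term $\|D(t)\|_{\lambda,\cdot}$ is linear in $\zeta$ and carries the factor $\widetilde{\eta'}$, analytic at radius $\gamma>\lambda_0>\lambda$; the surplus $\gamma-\lambda$ absorbs the polynomial weight falling on $\widetilde{\eta'}$, and the part of the weight falling on the time variable grows at most like $\langle t\rangle^2$, which is killed by the factor $\langle t\rangle^{-q}$ with $q\ge3$. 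This produces the term $C\,J^p_{\lambda_0}[\zeta]\,\|\eta'\|$.

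Next are the two nonlinear integrals in \eqref{delta}, coming from the algebraic splitting \eqref{alge}. In the \emph{field-heavy} term the extra polynomial power sits on $\zeta$ while $h$ appears only at index $1$; bounding $\|h(s)\|_{\lambda,1}\le\mathcal{K}^3[h]$ and using either the decay $\langle s\rangle^{r-p}$ of the field weight (for $\mathcal{K}^3$, where $r=4\le p$) or the cancellation of the growth $\int_0^t\langle s\rangle^2\,ds\le C\langle t\rangle^3$ against $\langle t\rangle^{-q}$ with $q\ge3$ (for $K^{p+1}_q$), this term is bounded by $\eps C\,J^p_{\lambda_0}[\zeta]\,\mathcal{K}^3[h]$, which supplies the ``$1$'' in the factor $\bigl(1+\tfrac1\delta\bigr)$.

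The genuine difficulty, and the reason for the two-term structure of \eqref{avantinorma}, is the \emph{derivative-loss} term, where $h$ appears one polynomial power above the index controlled by the norm. For $\mathcal{K}^3$ this is $\|h(s)\|_{\lambda,4}$, still at most $\|h(s)\|_{\lambda,p+1}$, so it is estimated directly by $K^{p+1}_q$ at the same radius $\lambda$, producing $\beta(\lambda,s)^{-1/2}$; since $p\ge q+3$ gives $\langle s\rangle^{1-p+q}\le\langle s\rangle^{-2}$ and $\partial_s\beta=-\delta\langle s\rangle^{-2}$, the integral telescopes,
\[
\int_0^t\frac{\langle s\rangle^{-2}}{\beta(\lambda,s)^{1/2}}\,ds=\frac{2}{\delta}\bigl(\beta(\lambda,0)^{1/2}-\beta(\lambda,t)^{1/2}\bigr)\le\frac{C}{\delta}.
\]
For $K^{p+1}_q$ the lost index is $p+2$, strictly above $p+1$, so a Cauchy--Kovalevskaya step is unavoidable: I evaluate $h$ at the intermediate radius $\mu(s)=\tfrac12\bigl(\lambda+\lambda_0-\delta\arctan s\bigr)$, the midpoint between $\lambda$ and the boundary of the analyticity region, so that both $\mu(s)-\lambda$ and $\beta(\mu(s),s)$ equal $\tfrac12\beta(\lambda,s)$. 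Trading the extra power for $(\mu(s)-\lambda)^{-1}$ gives $\|h(s)\|_{\lambda,p+2}\le C\langle s\rangle^q\beta(\lambda,s)^{-3/2}K^{p+1}_q[h]$, and the $\beta$-ODE now yields
\[
\int_0^t\frac{\langle s\rangle^{-2}}{\beta(\lambda,s)^{3/2}}\,ds=\frac{2}{\delta}\bigl(\beta(\lambda,t)^{-1/2}-\beta(\lambda,0)^{-1/2}\bigr)\le\frac{2}{\delta}\,\beta(\lambda,t)^{-1/2};
\]
the outer weight $\beta(\lambda,t)^{1/2}$ in the definition of $K^{p+1}_q$ exactly cancels this singularity, leaving $\eps\tfrac{C}{\delta}\,J^p_{\lambda_0}[\zeta]\,K^{p+1}_q[h]$. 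Collecting the index-$3$ and index-$(p+1)$ estimates and adding them gives the claimed inequality. I expect this final loss term to be the main obstacle: without the midpoint radius and the $\beta^{1/2}$ weight the extra derivative cannot be reabsorbed, and it is precisely this step that forces the hypotheses $p\ge q+3$ and $q\ge3$ and generates the factor $1/\delta$.
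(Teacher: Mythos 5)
Your proof is correct and follows essentially the same route as the paper's: you split the norm into its $\mathcal{K}^3$ and $K^{p+1}_q$ components, apply the integral inequality \eqref{delta} with indices $3$ and $p+1$, treat the linear term $D$ via the analyticity of $\eta'$ at radius $\gamma>\lambda_0$, bound the field-heavy term through $\mathcal{K}^3$ using either the decay $\langle s\rangle^{r-p}$ or the cancellation of $\langle t\rangle^3$ against $\langle t\rangle^{-q}$, and resolve the derivative-loss term by the midpoint-radius Cauchy--Kovalevskaya step combined with the exact integration of $\langle s\rangle^{-2}\beta^{-3/2}$ via the $\beta$-ODE, exactly as in the paper's terms $A_1$ and $A_2$. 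If anything you are slightly more explicit than the paper (e.g.\ the correct midpoint $\mu(s)=\tfrac12(\lambda+\lambda_0-\delta\arctan s)$, where the paper's $\lambda'(s)$ carries a sign typo, and the $\beta^{-1/2}$ factor in the cross term of the $\mathcal{K}^3$ estimate).
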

\begin{proof}
We first estimate the term of order one in \eqref{hforward}. If $m \ge p$, 
\begin{equation}
\label{stimaerre}
\begin{split}
A^{\lambda, m}_n (\xi) |D_n(t, \xi)| &\le C J_{\lambda_0}^p[\zeta] \|\eta'\| \int_0^t \e^{-(\gamma - \lambda)\left \langle \xi-ns \right \rangle} \left \langle s \right \rangle^{m-p} \left \langle \xi - ns \right \rangle^p \, ds, \\
&\le  C J_{\lambda_0}^p[\zeta] \|\eta'\| \left \langle t \right \rangle^{m-p}
\end{split}
\end{equation}
where we have used that $A^{\lambda, q}_n(\xi) \le C A^{\lambda, q}_n(\xi-ns) A^{\lambda, q}_n(ns)$ and the hypothesis on $\eta'$.\\
Now, since the norm \eqref{avantinorma} is composed by two parts, we start giving an estimate of the $\mathcal{K}^3$ norm. Using the result in \eqref{delta} we obtain
\begin{equation*}
\| h(t) \|_{\lambda, 3} \le \|h(0) \|_{\lambda_0, p} +\|D(t)\|_{\lambda, 3}+ \eps J_{\lambda_0}^p[\zeta]  \int_0^t \frac{\mathcal{K}^3[h]}{\left \langle s \right \rangle^{p-4}}+ \frac{K_q^{p+1}[h]}{\left \langle s \right \rangle^{p-1-q}}\, ds.
\end{equation*}
Using \eqref{stimaerre}, we get
$$
\mathcal{K}^3[h] \le \|h(0) \|_{\lambda_0, p}+ C J_{\lambda_0}^p[\zeta] \|\eta'\|_\gamma +\eps C J_{\lambda_0}^p[\zeta] K_{\lambda_0, q}^{3, p+1}[h].
$$
Next, we focus on $K_q^{p+1}$. Using \eqref{delta} with $p+1$, we get
\begin{equation*}
\begin{split}
\|h(t)\|_{\lambda, p+1} \le C\|h(0) \|_{\lambda_0, p}+\|D(t)\|_{\lambda, p+1}
+ \eps J_{\lambda_0}^p[\zeta](A_1+A_2).
\end{split}
\end{equation*}
where
$$
A_1= \int_0^t  \left \langle s \right \rangle^2  \| h(s) \|_{\lambda, 1}\, ds \le
C \left \langle t \right \rangle^{3} \mathcal{K}^3[h], \quad A_2=\int_0^t \frac{\| h(s) \|_{\lambda, p+2}}{\left \langle s \right \rangle^{p-1}}\, ds.
$$
For what concern $A_2$ we take
$$
\lambda'(s)=\frac{\lambda_0 - \delta \arctan(s)-\lambda}{2}
$$
then
$$
\|h(s)\|_{\lambda, p+2} \le \frac{\| h(s) \|_{\lambda', p+1}}{\lambda'-\lambda}
$$
and we get the bound
$$
A_2\le C \int_0^t \frac{1}{\left \langle s \right \rangle^{p-q-1}}\frac{K_q^{p+1}[h]}{\beta^{3/2}(\lambda,s)} \, ds
\le \frac{C}{\delta} \frac{K_q^{p+1}[h]}{\beta^{1/2}(\lambda, t)}
$$
where we have used that $p \ge q+3$ and the fact that the integral is exactly computable by 
$$
\frac{\de}{\de t} \beta^{-1/2}(\lambda, t)=
\frac{\delta}{2} \frac{1}{\beta^{3/2}(\lambda, t) \left \langle t \right \rangle^2}.
$$
Then we get, using $q \ge 3$,
\begin{equation}
\label{termuno}
\frac{\beta(\lambda, t)^{1/2}}{\left \langle t \right \rangle^q}\eps J_{\lambda_0}^p[\zeta](A_1+A_2)\le \eps J_{\lambda_0}^p[\zeta]\Bigl{(}C \mathcal{K}^3[h]+\frac{C}{\delta}K_q^{p+1}[h]\Bigr{)}.
\end{equation}
It remains to estimate the term of order one $D_n(t,\xi)$.
Using \eqref{stimaerre}, we obtain
\begin{equation}
\label{termdue}
\frac{\beta(\lambda, t)^{1/2}}{\left \langle t \right \rangle^q}\|d(t)\|_{\lambda,p+1}\le C  J_{\lambda_0}^p[\zeta] \|\eta'\|.
\end{equation}
Collecting the terms in \eqref{termuno} and \eqref{termdue} we conclude the proof.
\end{proof}

\subsection{The forward result}
\begin{Theorem}
Let us fix $p \ge q+3$ with $q \ge 3$ and consider $h_0(x,v)\in L^1(S^1 \times \R)$ a mean-zero analytic initial perturbation such that $\|h_0\|_{\lambda_0,p}<+\infty$ for some $\lambda_0>0$. Let $\eta(v)\in L^1(\R)$ analytic such that $\|\eta'\|_{\gamma}<+\infty$ with $\lambda_0<\gamma$.
Moreover, assume
\[
\mathcal{L}[j_1](\sigma) \neq 1 \quad \text{if} \quad \Re \sigma \ge 0,
\]
with $j_1$ as in \eqref{jei}.
Then there exists a unique solution $h(x,v,t)$ of \eqref{principale}
with initial datum $h_0$ such that $K^{3, p+1}_{\lambda_0, q}[h]<+\infty$ and exist $h_{\infty}$ with $\|h_{\infty}\|_{\widebar{\lambda},p}<+\infty$ for $\widebar{\lambda}<\lambda_0-\delta \pi/2$  such that
\[
\lim_{t \to \infty} \| h(x,v,t)-h_{\infty}(x,v)\|_{\infty}=0
\]
with exponential rate.
\end{Theorem}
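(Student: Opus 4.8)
The plan is to mirror the fixed-point scheme already used for the backward Theorem \eqref{MAIN}, but now integrating forward from the datum $h_0$ so that no auxiliary limit $T\to+\infty$ is required. First I would set up the Picard iteration attached to \eqref{hforward}: given $\zeta^{(j)}_{\pm1}$ I define $h^{(j+1)}$ as the solution of the \emph{linear} problem
\begin{equation*}
\widehat{h}^{(j+1)}_n(t,\xi)=\widehat{h}_n(0,\xi)+\delta_{n,\pm1}n\frac{\imm}{2}\int_0^t\zeta^{(j)}_n(s)\widetilde{\eta'}(\xi-ns)\,ds-\frac{\eps}{2}\sum_{k=\pm1}k\int_0^t\zeta^{(j)}_k(s)\widehat{h}^{(j+1)}_{n-k}(s,\xi-ks)(\xi-ns)\,ds,
\end{equation*}
while $\zeta^{(j)}$ is recovered from the Volterra equation \eqref{zforward} inverted through Theorem \eqref{resolvo}, exactly as in the inversion lemma of this section, with starting step $h^{(0)}\equiv h_0$. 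The two \emph{a priori} bounds are already in hand: Proposition \eqref{propo:zita} gives $J_{\lambda_0}^p[\zeta^{(j)}]\le C+\eps C\,J_{\lambda_0}^p[\zeta^{(j)}]\,K^{3,p+1}_{\lambda_0,q}[h^{(j)}]$, and the subsequent Proposition bounds $K^{3,p+1}_{\lambda_0,q}[h^{(j+1)}]$ by $C\|h_0\|_{\lambda_0,p}+C\,J_{\lambda_0}^p[\zeta^{(j)}]\|\eta'\|+\eps C(1+1/\delta)J_{\lambda_0}^p[\zeta^{(j)}]K^{3,p+1}_{\lambda_0,q}[h^{(j+1)}]$.

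Second, I would close these inequalities. Since the zeroth iterate obeys $K^{3,p+1}_{\lambda_0,q}[h^{(0)}]\le C\|h_0\|_{\lambda_0,p}$, an induction on $j$ shows that, for $\eps\|h_0\|_{\lambda_0,p}$ small enough, both $J_{\lambda_0}^p[\zeta^{(j)}]$ and $K^{3,p+1}_{\lambda_0,q}[h^{(j)}]$ stay uniformly bounded in $j$. I would then prove the sequence is Cauchy by estimating the pair $(\zeta^{(j+1)}-\zeta^{(j)},\,h^{(j+1)}-h^{(j)})$ with the very manipulations that produced Proposition \eqref{propo:zita} and the $h$-estimate; the bilinear structure of \eqref{hforward}--\eqref{zforward} yields a contraction factor proportional to $\eps$, so for small $\eps$ the iteration converges in $J_{\lambda_0}^p$ and $K^{3,p+1}_{\lambda_0,q}$ to a pair $(\zeta,h)$ solving \eqref{principale} with $K^{3,p+1}_{\lambda_0,q}[h]<+\infty$ and $\zeta_n(t)=\widehat{h}_n(t,nt)$. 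Uniqueness follows identically: two solutions with datum $h_0$ satisfy the same bounds, and their difference obeys $A\le C(\eps)A$ with $A=\max\bigl(J_{\lambda_0}^p[\zeta_g-\zeta_h],\,K^{3,p+1}_{\lambda_0,q}[g-h]\bigr)$ and $C(\eps)<1$, forcing $g=h$.

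Third --- the genuinely new part of the statement --- I would construct the scattering state $h_\infty$ and establish exponential convergence. The bound $J_{\lambda_0}^p[\zeta]<+\infty$ forces $|\zeta_{\pm1}(s)|\le C\e^{-\lambda s}$ on the admissible range of $\lambda$, while the analyticity of $\eta$ makes $\widetilde{\eta'}(\xi-ns)$ and the quadratic term integrable in $s$; hence each integral in \eqref{hforward} converges as $t\to+\infty$, and I would \emph{define}
\begin{equation*}
\widehat{h}_{\infty,n}(\xi)=\widehat{h}_n(0,\xi)+\delta_{n,\pm1}n\frac{\imm}{2}\int_0^{+\infty}\zeta_n(s)\widetilde{\eta'}(\xi-ns)\,ds-\frac{\eps}{2}\sum_{k=\pm1}k\int_0^{+\infty}\zeta_k(s)\widehat{h}_{n-k}(s,\xi-ks)(\xi-ns)\,ds.
\end{equation*}
Then $\widehat{h}_n(t,\xi)-\widehat{h}_{\infty,n}(\xi)$ is precisely the tail $\int_t^{+\infty}(\cdots)\,ds$, which I would bound by a constant times $\e^{-\widebar{\lambda}\langle t\rangle}$ after surrendering a fixed amount of regularity. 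Since the admissible weight requires $\beta(\lambda,t)>0$, i.e.\ $\lambda<\lambda_0-\delta\arctan t$, and $\arctan t\to\pi/2$, the limit lives at any $\widebar{\lambda}<\lambda_0-\delta\pi/2$, giving $\|h_\infty\|_{\widebar{\lambda},p}<+\infty$ and $\|h(t)-h_\infty\|_{\widebar{\lambda}}\to0$ exponentially, whence the stated convergence in $\|\cdot\|_{\infty}$.

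The step I expect to be the main obstacle is the tail estimate of the third paragraph, specifically the echo contributions of the quadratic term in \eqref{hforward}, where the slowly decaying zero mode $\widehat{h}_0(s,t-s)$ and the mode $\widehat{h}_2(s,t+s)$ couple to the field. This is exactly what forces the two-part norm \eqref{avantinorma}: the $\mathcal{K}^3$ piece must absorb the echo kernels $\langle t-s\rangle^{-2}+\langle t+s\rangle^{-2}$ produced by these modes, while the weighted $K^{p+1}_q$ piece carries the Cauchy--Kovalevskaya loss, and it is the compatibility $p\ge q+3$, $q\ge3$ that renders both time integrals convergent. I would therefore perform the tail bound by reusing the estimates of $I_1$ and $I_2$ in Proposition \eqref{propo:zita}, so that the resonant modes are controlled through the $\langle n,\xi\rangle^{3}$ decay built into $\mathcal{K}^3[h]$ and the exponential rate survives the passage to the limit.
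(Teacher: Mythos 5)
Your proposal is correct and follows essentially the same route as the paper: a forward Picard iteration as in \eqref{iterative1}, closed via the two \emph{a priori} propositions of Section 4 and the smallness of $\eps$, with uniqueness by the contraction estimate on differences. Your construction of $h_\infty$ as the full Duhamel integral with an exponentially small tail is just an integrated form of the paper's bound $\|\partial_t h(t)\|_{0,p}\le C\e^{-\widebar{\lambda}t}$ for $\widebar{\lambda}<\lambda_0-\delta\pi/2$, so the final step coincides as well.
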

\begin{proof}
The proof is analogous to the first part of Theorem \eqref{MAIN}. By a standard iterative procedure as in \eqref{iterative1} and using the smallness of the parameter $\eps$, we get the existence of the unique solution $h$ in the class of functions such that $K^{3, p+1}_q <+\infty$.
Then the damping property follows from the estimate
$$
\| \partial_t h(t)\|_{0,p} \le C \e^{-\widebar{\lambda} t}
$$
with $\widebar{\lambda}<\lambda_0 - \delta \pi/2$. It follows  that $h(t) \to h_\infty$ with exponential rate. 
\end{proof}

\section{Backward vs forward}

In the scattering problem,
the decay of the analytic
regularity, in the spirit
of the abstract Cauchy-Kovalevskaya theorem,
is more difficult to establish
(compare the definition of $\alpha^T(\mu,t)$
in \eqref{domain}, \eqref{eqdiff} with that of
$\beta(\lambda,t)$ in  \eqref{eq:beta}).
Despite this fact, the scattering approach is easier.
In particular,
the bound on the norm \eqref{campoindietro} 
guarantees that for any $t\ge 0$
$$|\zeta_{\pm 1}(t)|\le c \e^{-\lambda t},$$
while the bound on the norm \eqref{avanticampo}
guarantees an estimate with a time correction:
for any $t\ge 0$ and $\lambda < \lambda_0 - \delta
\arctan t$
$$|\zeta_{\pm 1}(t)|\le c \e^{-\lambda t}/\langle t \rangle ^p.$$

More in general, the norm on $h$ in 
\eqref{analytic}, \eqref{CK} is simpler than 
that in \eqref{avantinorma},
in which we 
have to introduce algebraic weights
like $\left \langle t \right \rangle^q$ in order to obtain closed estimates. 

This technical issue is mainly due to the 
different treatment of the plasma echoes,
the resonances which
occur in 
\eqref{zetaequa} and \eqref{zforward}
when $nt=ks$, {\it i.e. } when $n=k=\pm 1$,
and $t=s$.
In the {\it a-priori} estimate of $\zeta_{\pm 1}$
in Proposition \ref{stimadelcampo}, there are no difficulties
and we control the resonant terms, those with 
$k=n$, in the same way as the non-resonant ones, those with $k=-n$.
In Proposition \ref{propo:zita}, the echoes force us to introduce
the additional term $\mathcal{K}^3$ in the norm of $h$.
Note also that, in \eqref{MAIN}, we perform a more subtle control of the echoes
in \eqref{dasplittare}, with an estimate in two time steps,
by using \eqref{accazero} and the mean zero of $\omega-\widebar{\omega}$.
In this way, we obtain the backward non-perturbative result of Section 3.

\vskip.3cm
The main reason of this different behavior is that the solution $h(t)$,
with asymptotic datum $h_\infty$,
{\it gains} regularity
as $t$ increases, thanks to the damping properties of the free flow,
while the solution $h(t)$, 
with initial datum $h_0$, {\it loses} regularity as $t$ increases.
The non-perturbative result clarifies this point:
in some sense
for $t\in [\tau,+\infty)$, for large $\tau$, the evolution
is close to the free flow and it is not much affected by the echoes.
In the forward problem,
at finite time, despite the pertubative setting,
the free flow regularizing property
has not yet acted, then the effect of the echoes
is more challenging.

These plasma echoes
are considered the major technical difficulty in obtaining global in time estimates for this kind of equations.
We believe that the difference in the echo treatment is the main advantage of
the backward approach. 
 This issue is confirmed by the comparison with the other works in the literature which deal with the forward problem. In \cite{FR}[ eq.s (2.1), (2.2)] an analogous term is introduced to treat the two modes of the electric field; also in the general case in \cite{BMM}[eq.s (2.12 a/b/c)]the norm is chosen in order to control the so-called reaction and transport terms of the equation.

 \section*{Appendix: proof of lemma \eqref{lemma:a}}

Here we omit the symbol $\delta$ from $a_{T,\delta}$.
Since $a_{T }(t)$ is decreasing, we have, for any $\widebar{t} \in [0,T]$,
$$\begin{aligned}
  a_{T }(0)&=a_{T}(\widebar{t})+\delta \int_0^{\widebar{t}}e^{-a_{T }(s)s}(1+s) \, ds \le a_{T}(\widebar{t})+\delta\int_0^{\widebar{t}}e^{-a_{T}(\widebar{t})s}(1+s) \, ds\\
  &\le a_{T}(\widebar{t})+\delta\frac{1}{a_{T}(\widebar{t})}+\delta\frac{1}{a_{T}^2(\widebar{t})}.
\end{aligned}
$$
If $\delta \le 1$, the minimum of $x+\delta/x+ \delta/x^2$, for $x>0$, 
is less then $c_1\delta^{1/3}$ and is reached in $x<c_2\delta^{1/3}$.
Then,  if $a_{T }(0)\ge \max(c_1,c_2)\delta^{1/3}$,  
the right-hand side reach the minimum for some $\bar t$, and then
$a_{T }(0) \le c_1\delta^{1/3}$. This implies that 
 $a_{T }(0) \le \max(c_1,c_2) \delta^{1/3}$.

For any $t<T$, $a_{T }$ is uniformly bounded and is
increasing in $T$, so it  converges to a positive
function $a_{\infty}(t)$.
For any time interval in $[0,+\infty)$,
by dominated convergence 
in the integral
formulation of \eqref{eqdiff},
we get that $a_{\infty}(t)$
solves the differential equation
with initial datum $a_{\infty}(0)$.

Now we prove that $\lim_{t\to +\infty} a_{\infty}(t) = 0$.
First notice that given $b>0$ there exists $b_0>0$ such that
the solution of
$$\dot a = -\delta \e^{-ta}(1+t)$$
with initial datum $b_0$ exists for all times and
$a(t)\ge b$ for all time. To prove this,
we choose $b_0> b + \delta(1/b + 1/b^2)$
and consider the first time $\tau$ such that $a(\tau)= b$.
Until $\tau$,
$$b_0 - a(t) = \delta \int_0^t \e^{-as}(1+s)\de s \le
\delta \left( \frac 1b +\frac 1{b^2}\right).$$
Then $\tau = +\infty$.

Let $a(0)$ be the initial datum of a  generic solution $a(t)$. Set
$$\bar a = \inf\{ a(0)| \lim_{t\to +\infty} a(t) \ge 0\},$$
and let $\bar a(t)$ the solution with initial datum $\bar a$.
It is easy to prove that
$\bar a(t)\to 0$, otherwise $\bar a$ is
not the infimum.
We conclude the proof by noticing that
$a_\infty(0) \le \bar a$, then
$a_\infty(t)$ is dominated by $\bar a(t)$ which is a vanishing function.

\end{document}